\newtheorem{theorem}{Theorem}[section]
\newtheorem{cor}[theorem]{Corollary}
\newtheorem{defn}[theorem]{Definition}
\newtheorem{exam}[theorem]{Example}
\newtheorem{lemma}[theorem]{Lemma}
\newtheorem{prop}[theorem]{Proposition}
\newtheorem{rem}[theorem]{Remark}
\title{Seshadri Constants  Over fields of characteristic zero }
\author{Shripad M. Garge}
\address{SMG: Department of Mathematics, Indian Institute of Technology Bombay, Powai, Mumbai. 400 076. INDIA.}
\email{shripad@math.iitb.ac.in, smgarge@gmail.com}
\author{Arghya Pramanik}
\address{AP: Department of Mathematics, Indian Institute of Technology Bombay, Powai, Mumbai. 400 076. INDIA.}
\email{arghya@math.iitb.ac.in, arghyapramanik4@gmail.com}
\subjclass[2010]{Primary 14C20; Secondary 14C05, 14G05}
\keywords{Seshadri constants, nef line bundles, Seshadri curves}
\begin{document}

\begin{abstract}
Let $X$ be a smooth projective variety defined over a field $k$ of characteristic $0$ and let $\mathcal{L}$ be a nef line bundle defined over $k$. 
We prove that if $x\in X$ is a $k$-rational point then the Seshadri constant $\epsilon(X, \mathcal{L}, x)$ over $\overline{k}$ is the same as that over $k$.
We show, by constructing families of examples, that there are varieties whose global Seshadri constant $\epsilon(X)$ is zero.
We also prove a result on the existence of a Seshadri curve with a natural (and necessary) hypothesis.
\end{abstract}

\maketitle

\section{Introduction}
Let $X$ be a smooth projective variety defined over an algebraically closed field and let $\mathcal{L}$ be a line bundle on $X$. 
Let us fix $x \in X$. 
For any curve $C$ in $X$ passing through $x$, the multiplicity of $C$ at $x$ is denoted by ${\rm{mult}}_xC$.
Seshadri ampleness criterion (\cite[Theorem 7.1]{Har}) tells us that $\mathcal{L}$ is ample if and only if for any irreducible curve $C$ such that $x \in C \subseteq X$ there exists a positive real number $\epsilon > 0$ such that 
$$\mathcal{L}\cdot C \geq \epsilon ~{\rm{mult}}_x C$$
where $\mathcal{L}\cdot C$ is the intersection pairing. 
Using this criterion, Demailly first defined Seshadri constant in \cite{Dem}, with the hope that it could be used to prove cases of Fujita's conjecture.
We give Demailly's definition below. 

\begin{defn}\cite[6.1]{Dem}\label{def1}
Let $X$ be as above, $x \in X$ and let $\mathcal{L}$ be a nef line bundle on $X$. 
The {\rm Seshadri constant} of $\mathcal{L}$ at $x$, denoted by $\epsilon(X,\mathcal{L},x)$, is defined by
$$\epsilon(X,\mathcal{L},x) := \inf_{x \in C} \frac{\mathcal{L}\cdot C}{\rm{mult}_xC}$$ 
where the infimum is taken over all irreducible curves $C$ in $X$ passing through $x$.
\end{defn}

We can now reformulate the Seshadri ampleness criterion as ``$\mathcal{L}$ is ample if and only if the Seshadri constant $\epsilon(X,\mathcal{L},x)$ is positive for every point $x \in X$''. 

The computations of Seshadri constants are considered to be rather hard in general. 
The known results are very specific, in the sense that the computations are done for specific varieties. 
In some cases, upper bounds on Seshadri constants are known. 
It is clear that $\epsilon(X,\mathcal{L},x) \ge 0$ but no example is known with $\epsilon(X, \mathcal{L})=\inf_{x \in X}\epsilon(X,\mathcal{L},x) =0$. 
Further, it is not known if $\epsilon(X,\mathcal{L},x)$ can be an irrational number. 
For more information on Seshadri constants, the motivations behind defining them and the known results, we refer the reader to the excellent survey on the topic, \cite{primer}.

In this paper, we begin with a smooth projective variety $X$ defined over a characteristic zero field $k$ and a nef line bundle $\mathcal{L}$ on $X$ defined over $k$. 
If $x \in X$ is a $k$-rational point then we prove that the Seshadri constant $\epsilon(X, \mathcal{L}, x)$ can be computed over $k$, that is, the infimum given in the above definition needs to be taken only over the curves which are defined over $k$ (Corollary \ref{Cor1}). 

To be able to work over a general field $k$, not necessarily algebraically closed, we need to use another definition of Seshadri constants. 

\begin{defn}[\cite{Bau}]\label{def2}
Let $X, \mathcal{L}$ and $x$ be as above. 
Let $\pi:\tilde{X}\to X$ be the blow-up of $X$ at $x$ with the exceptional divisor $E$. 
Then
$$\epsilon(X,\mathcal{L},x) := \sup\{\lambda \geq 0 : \pi^*(\mathcal{L})-\lambda\cdot E {\rm ~is~nef}\} .$$
\end{defn}

This definition is more useful than the earlier one because, for instance, one gets the following nice upper bound using this definition. 

\begin{prop}[\cite{Laz}]\label{Laz:bound}
Let $X$ be an $n$ dimensional projective variety defined over an algebraically closed field, $\mathcal{L}$ be a nef line bundle on $X$ and $x \in X$. 
Then $\epsilon(X,\mathcal{L},x) \leq \sqrt[n]{\mathcal{L}^n}$.
\end{prop}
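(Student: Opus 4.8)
The plan is to use the blow-up description of the Seshadri constant from Definition \ref{def2} together with the basic positivity fact that the top self-intersection number of a nef divisor is non-negative. First I would set $\epsilon = \epsilon(X,\mathcal{L},x)$ and let $\pi : \tilde{X} \to X$ be the blow-up at $x$ with exceptional divisor $E$. By Definition \ref{def2}, for every real number $\lambda$ with $0 \le \lambda < \epsilon$ the $\mathbb{R}$-divisor $\pi^*\mathcal{L} - \lambda E$ is nef. The key input is that a nef $\mathbb{R}$-divisor $D$ on an $n$-dimensional projective variety satisfies $D^n \ge 0$ (nef classes are limits of ample classes, for which this is clear). Applying this to $D = \pi^*\mathcal{L} - \lambda E$ will yield the desired inequality once the self-intersection is computed.

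Next I would expand $(\pi^*\mathcal{L} - \lambda E)^n$ by the binomial theorem and evaluate each mixed term using the geometry of a point blow-up. The relevant intersection numbers are: $(\pi^*\mathcal{L})^n = \mathcal{L}^n$ by the projection formula, since $\pi$ is birational; the mixed terms $(\pi^*\mathcal{L})^i \cdot E^{n-i} = 0$ for $1 \le i \le n-1$, because $E^{n-i}$ is a cycle of positive dimension $i$ supported on $E$, which $\pi$ contracts to the point $x$, so its pushforward vanishes and the projection formula kills the term; and finally $E^n = (-1)^{n-1}$, since $E \cong \mathbb{P}^{n-1}$ with $\mathcal{O}_{\tilde{X}}(E)|_E \cong \mathcal{O}_{\mathbb{P}^{n-1}}(-1)$. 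Only the $i = n$ and $i = 0$ summands survive, and a short sign computation $\binom{n}{0}(-\lambda)^n E^n = (-1)^n\lambda^n(-1)^{n-1} = -\lambda^n$ gives
$$(\pi^*\mathcal{L} - \lambda E)^n = \mathcal{L}^n - \lambda^n.$$

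Combining the two steps, nefness forces $\mathcal{L}^n - \lambda^n \ge 0$, that is $\lambda \le \sqrt[n]{\mathcal{L}^n}$, for every $\lambda < \epsilon$; letting $\lambda \to \epsilon^-$ then yields $\epsilon \le \sqrt[n]{\mathcal{L}^n}$. The main conceptual ingredient, and the step I would flag as the crux, is the positivity statement $D^n \ge 0$ for nef $D$; the remainder is bookkeeping on the blow-up. I would also note that the clean evaluation $E^n = (-1)^{n-1}$ relies on $x$ being a smooth point of $X$, which is the standing assumption in our setting, so that the blow-up is smooth and $E$ is a projective space with the expected self-intersection.
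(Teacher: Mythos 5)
Your argument is correct and is the standard one. The paper itself states Proposition \ref{Laz:bound} as a citation to \cite{Laz} without proof, but your route --- nefness of $\pi^*\mathcal{L}-\lambda E$ for $\lambda<\epsilon$, non-negativity of the top self-intersection of a nef class, and the evaluation $(\pi^*\mathcal{L}-\lambda E)^n=\mathcal{L}^n-\lambda^n$ using $E^n=(-1)^{n-1}$ --- is exactly the argument the authors use for the analogous surface statement, Proposition \ref{Seshadri-curves}. Your closing caveat that the computation of $E^n$ requires $x$ to be a smooth point is well placed, since the proposition as stated only says ``projective variety''; at a singular point one gets the weaker bound $\epsilon\leq\sqrt[n]{\mathcal{L}^n/\mathrm{mult}_xX}$-type statements instead.
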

	
Further, to be able to work over a non-algebraically closed field, we use scheme-theoretic language and results from \cite{Das}. 
We also need to carefully develop the intersection theory on varieties defined over a general field (\cite{Das, Liu}).
We then define the {\it local Seshadri constant}, the version of Seshadri constant relative to the base field $k$, and show that this version is well-behaved under base extensions. 
This means that, if $K/k$ is an extension of fields then the Seshadri constant at $y_0 \in X_{K}$ is the same as the Seshadri constant at $x_0 \in X$ where $X_K$ is the base extension of $X$ and $y_0$ maps to $x_0$ under the projection map (Theorem \ref{Main1}).

Further, in $\S 4$, we study the behaviour of Seshadri constants over finite \'{e}tale covers.
We show, by giving examples, that the Seshadri constants can be unbounded over a family of finite \'{e}tale covers.
This is generalisation of results proved in \cite{roth} where authors worked over algebraically closed fields. 

In the final section, we prove, by giving a family of examples, the existence of smooth projective varieties over a characteristics zero field whose global Seshadri constant, $\epsilon(X)$, is zero (Corollary \ref{cor-Seshadri-curves}).
We close the paper with the existence of Seshadri curves (Theorem \ref{Main2}) under conditions which are similar to \cite[Lemma 5.2]{Bau} where, again, the author was working over algebraically closed fields. 

\section*{Acknowledgements}
The authors would like to thank Dipendra Prasad, Sudarshan Gurjar and Saurav Bhaumik for their inputs at various stages of this work. 
They would also like to thank Indranil Biswas for his interest in this work and Omprokash Das for his permission to include some of the results (and proofs) from \cite{Das}.

This question was inspired by some discussions with Indranil Biswas and Krishna Hanumanthu during the ICTS meeting on ``Topics in Birational Geometry''. 
The first named author would like to thank the organisers of the meeting, ICTS/Prog-TBG2020/01, for inviting him to the meeting and to the ICTS for local hospitality. 

\section{Seshadri constants on varieties over arbitrary fields}

Let $k$ be a field of characteristic zero. 

A {\it variety} over $k$ is an integral scheme which is separated and of finite type over $k$. 
A {\it subvariety} $Y$ of $X$ is a closed subscheme of $X$ which is a variety over the ground field of $X$.
In this paper, we assume that all varieties are normal, geometrically integral and projective over the base field.

Due to the lack of a widely accepted reference on intersection theory over a general field, we give it in some detail here. 

Let $C$ be a curve over $k$. 
We express a Cartier divisor $W$ on $C$ by $\sum_{x \in C} ({\rm mult}_xW)x$, where the sum is taken over closed points. 
Following Liu (\cite{Liu}), we define the degree of $W$, $\deg_kW=\sum_x ({\rm mult}_x W) [k(x):k]$, where the sum is, again, taken over closed points. 

Now, let $X$ be a variety over $k$. 
Let $C$ be a curve in $X$ and $W$ be a Cartier divisor on $X$. 
Then we define, following Das and Waldron (\cite[section 3]{Das}), the intersection number of $C$ and $W$ over $k$ by $\deg_k W|_C$ and denote it by $W\cdot_kC$.

Let $X$ be a variety of dimension $n$ over $k$ and let $x \in X$ be a closed point. 
Let $\pi:\tilde{X}\to X$ be the blow-up of $X$ at $x$ with the exceptional divisor $E= {\rm Proj} (\oplus_{i\geq0}{m_x}^{i}/{m_x}^{i+1})$. 
The {\it $k$-multiplicity of $X$ at $x$} is defined by ${\rm mult}_{x/k}X = (-1)^{n+1}(E^n)_k$ where $(E^n)_k$ denotes the $n$-fold self intersection of $E$ over $k$ (\cite[Definition 9.1]{Das}).
It follows from \cite[Lemma 9.3]{Das} that if $X$ is smooth and if $x \in X$ is a closed point then ${\rm mult}_{x/k}X = [k(x):k]$.
In particular, for a $k$-rational point, $x \in X(k)$, the multiplicity of $X$ at $x$ is equal to $1$. 

Now we mention some results on the behaviour of blow-ups on a variety over an arbitrary field.

\begin{lemma}\cite[Lemma 9.4]{Das}\label{Das:9.4}
Let $X$ be a smooth, projective variety over $k$ with dimension at least 2 and let $W$ be an effective Cartier divisor on $X$. 
Let $h$ be a local equation of $W$ and let $x \in W$ be a closed point of $X$. 
Let $l\geq0$ be the largest integer such that $h\in m_x^l$.
Then we have 
$${\rm mult}_{x/k}W=l[k(x):k] \hskip5mm {\rm and} \hskip5mm \pi^*W=\tilde{W}+lE$$ 
where $m_x$ is the maximal ideal of the local ring $O_{X,x}$, $\pi:\tilde{X}\to X$ is the blow-up of $X$ at $x$ with exceptional divisor $E$ and $\tilde{W}$ is the strict transform of $W$ on $\tilde{X}$.
\end{lemma}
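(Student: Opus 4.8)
The plan is to prove the divisor pullback identity $\pi^* W = \tilde W + lE$ first, and then deduce the multiplicity formula ${\rm mult}_{x/k} W = l[k(x):k]$ from it by an intersection computation over $k$. For the pullback identity, I would start from the observation that $\pi$ is an isomorphism over $X \setminus \{x\}$, so $\pi^* W$ and $\tilde W$ agree there; since $\tilde X$ is smooth (hence normal) and $E$ is the only $\pi$-exceptional prime divisor, this forces $\pi^* W = \tilde W + mE$ for a unique integer $m$, and the task reduces to showing $m = l$ (and that the residual divisor $\pi^* W - lE$ really is the strict transform, i.e. has coefficient $0$ along $E$). I would compute $m$ as the order of vanishing of $\pi^* W$ along $E$, using the discrete valuation $v = \operatorname{ord}_E$ attached to the DVR $O_{\tilde X, \eta_E}$ at the generic point $\eta_E$ of $E$. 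Under the identification $K(\tilde X) = K(X)$ one has $m = v(\pi^* h) = v(h)$, and the decisive local input is that the valuation of the exceptional divisor of the blow-up of the regular local ring $O_{X,x}$ at its maximal ideal restricts to the $m_x$-adic order valuation; hence $m = \operatorname{ord}_{m_x}(h) = l$ by the definition of $l$.

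Concretely, this local input is verified by choosing a regular system of parameters $x_1, \dots, x_n$ of $O_{X,x}$ and passing to the standard affine charts of the blow-up: in the chart where $E = \{x_i = 0\}$ one factors $\pi^* h = x_i^{\,l}\,\tilde h$, and the restriction $\tilde h|_E$ is governed by the leading form $\bar h \in m_x^{\,l}/m_x^{\,l+1} \cong \kappa[T_1, \dots, T_n]_l$ (evaluated at $T_i = 1$), which is nonzero precisely because $h \notin m_x^{\,l+1}$. The only subtlety over a non-closed base is that $\kappa = k(x)$ need not be algebraically closed, but a nonzero homogeneous form over the field $\kappa$ still defines a nonzero section on $E \cong \mathbb{P}^{n-1}_\kappa$, so $\tilde h$ cannot vanish identically along $E$ and the coefficient is exactly $l$.

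For the multiplicity, I would use that the strict transform of a subvariety equals the blow-up of that subvariety along the restricted center, so $\tilde W$ is the blow-up of $W$ at $x$ with exceptional divisor $E_W = E|_{\tilde W}$; by definition ${\rm mult}_{x/k} W = (-1)^{n}(E_W^{\,n-1})_k$. The compatibility of restriction with intersection over $k$ gives $(E_W^{\,n-1})_k = (E^{n-1}\cdot \tilde W)_k$, and substituting $\tilde W = \pi^* W - lE$ yields $(E^{n-1}\cdot \pi^* W)_k - l(E^n)_k$. Here the first term vanishes by the projection formula: $E^{n-1}$ is a $1$-cycle supported on $E$, and since $\pi(E) = \{x\}$ is $0$-dimensional one has $\pi_*(E^{n-1}) = 0$, while degrees over $k$ are preserved under proper pushforward. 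Thus $(E_W^{\,n-1})_k = -l(E^n)_k$, and therefore ${\rm mult}_{x/k} W = (-1)^{n+1} l (E^n)_k = l\cdot {\rm mult}_{x/k} X = l[k(x):k]$, the last step being the smooth case recorded in \cite[Lemma 9.3]{Das}.

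The step I expect to be the main obstacle is the local identification of the coefficient $m$ with the $m_x$-adic order $l$ while keeping track of the residue field $\kappa = k(x)$: one must ensure that both the order-valuation description of $\operatorname{ord}_E$ and the nonvanishing of the leading form go through without assuming $\kappa$ algebraically closed. Once this local statement is secured, the multiplicity formula is a purely formal consequence of the pullback identity, the projection formula, and the intersection theory over $k$ set up above.
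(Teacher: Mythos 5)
The paper does not prove this lemma at all --- it is quoted verbatim from \cite[Lemma 9.4]{Das} with no reproduced argument --- so there is no in-paper proof to compare against. Your proposal is correct and follows the standard route (and, as far as the cited source goes, essentially the same one): identify the coefficient of $E$ in $\pi^*W$ with the $m_x$-adic order of $h$ via the valuation ${\rm ord}_E$ on the regular local ring $O_{X,x}$, then compute ${\rm mult}_{x/k}W=(-1)^n(E^{n-1}\cdot\tilde{W})_k=(-1)^{n+1}l(E^n)_k=l[k(x):k]$ using the projection formula and the smooth case \cite[Lemma 9.3]{Das}; your attention to the residue field $k(x)$ not being algebraically closed is exactly the point that needs care and is handled correctly.
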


\begin{rem}\cite[Remark 9.5]{Das}\label{Das:9.5}
In the above setup, let $C$ be a curve on $X$ passing through $x$ and let $\tilde{C}$ be the strict transform of $C$ on $\tilde{X}$ then ${\rm mult}_{x/k}C=E\cdot_k\tilde{C}$.
\end{rem}

\begin{lemma}\cite[Chapter 8, Prop 1.12]{Liu}\label{Liu:8.1.12}
Let $X$ be a locally Noetherian scheme and let $\mathcal{I}$ be a quasi-coherent sheaf of ideals on $X$ and let $\pi:\tilde{X}\to X$ be the blow-up along the center $V(\mathcal{I})$. 
Let $Z\to X$ be a flat morphism of schemes where $Z$ is a locally Noetherian scheme. 
Let $\tilde{Z}\to Z$ be the blow-up of $Z$ along the center $\mathcal{I}O_Z$ then $\tilde{Z}\simeq\tilde{X}\times_X Z$.
\end{lemma}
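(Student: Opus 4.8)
The plan is to realize both blow-ups as relative $\mathrm{Proj}$ of their respective Rees algebras and to reduce the statement to two facts: that $\mathrm{Proj}$ commutes with base change, and that the pulled-back Rees algebra is isomorphic to the Rees algebra of the extended ideal. This second isomorphism is exactly where the flatness hypothesis enters.

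First I would write $\tilde{X}=\mathrm{Proj}_X\mathcal{R}$, where $\mathcal{R}=\bigoplus_{d\geq 0}\mathcal{I}^d$ is the Rees algebra of $\mathcal{I}$ (with $\mathcal{I}^0=\mathcal{O}_X$), and $\tilde{Z}=\mathrm{Proj}_Z\mathcal{R}'$, where $\mathcal{R}'=\bigoplus_{d\geq 0}(\mathcal{I}\mathcal{O}_Z)^d$. Writing $g:Z\to X$ for the structure morphism, the standard base-change property of relative $\mathrm{Proj}$ (valid since $\mathcal{R}$ is generated in degree one over $\mathcal{O}_X$) gives a canonical $Z$-isomorphism $\tilde{X}\times_X Z\cong \mathrm{Proj}_Z(g^*\mathcal{R})$, where $g^*\mathcal{R}=\bigoplus_{d\geq 0}g^*(\mathcal{I}^d)$ as a graded $\mathcal{O}_Z$-algebra. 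It therefore suffices to produce a graded $\mathcal{O}_Z$-algebra isomorphism $g^*\mathcal{R}\cong\mathcal{R}'$.

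In each degree there is a canonical surjection $g^*(\mathcal{I}^d)=\mathcal{I}^d\otimes_{\mathcal{O}_X}\mathcal{O}_Z\twoheadrightarrow \mathcal{I}^d\mathcal{O}_Z=(\mathcal{I}\mathcal{O}_Z)^d$, induced by the multiplication map into $\mathcal{O}_Z$, and these assemble into a surjection of graded $\mathcal{O}_Z$-algebras $g^*\mathcal{R}\to\mathcal{R}'$. The key step is to show it is injective, and here I would use flatness of $g$: applying the exact functor $g^*$ to $0\to\mathcal{I}^d\to\mathcal{O}_X\to\mathcal{O}_X/\mathcal{I}^d\to 0$ yields an exact sequence $0\to g^*(\mathcal{I}^d)\to\mathcal{O}_Z\to g^*(\mathcal{O}_X/\mathcal{I}^d)\to 0$, so the natural map $g^*(\mathcal{I}^d)\to\mathcal{O}_Z$ is injective with image exactly $(\mathcal{I}\mathcal{O}_Z)^d$. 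Hence $g^*(\mathcal{I}^d)\xrightarrow{\sim}(\mathcal{I}\mathcal{O}_Z)^d$ in every degree, so $g^*\mathcal{R}\cong\mathcal{R}'$; taking $\mathrm{Proj}_Z$ and composing with the base-change isomorphism gives the desired $Z$-isomorphism $\tilde{X}\times_X Z\cong\tilde{Z}$.

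The main obstacle is precisely the injectivity of $g^*\mathcal{R}\to\mathcal{R}'$, that is, that $g^*(\mathcal{I}^d)$ really embeds in $\mathcal{O}_Z$ as the extended ideal rather than merely surjecting onto it. Without flatness the functor $g^*$ need not preserve the inclusion $\mathcal{I}^d\hookrightarrow\mathcal{O}_X$, and one obtains only a closed immersion $\tilde{Z}\hookrightarrow\tilde{X}\times_X Z$ (the strict transform inside the total transform); flatness is exactly what collapses this inclusion to an isomorphism. The remaining points—that $\mathcal{R}$ is generated in degree one so that $\mathrm{Proj}$ commutes with base change, and that all the identifications are compatible with the projections to $Z$—are routine, and I would only verify them briefly.
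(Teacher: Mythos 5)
Your argument is correct: realizing both blow-ups as $\mathrm{Proj}$ of Rees algebras and using flatness to identify $g^*(\mathcal{I}^d)$ with $\mathcal{I}^d\mathcal{O}_Z=(\mathcal{I}\mathcal{O}_Z)^d$ is exactly the standard proof, and it is essentially the one given in the cited source (Liu, Ch.~8, Prop.~1.12); the paper itself quotes this lemma without proof. Your identification of flatness as the precise point where injectivity of $g^*\mathcal{R}\to\mathcal{R}'$ could fail is also the right diagnosis.
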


We have the following version of Bezout's Theorem in the above setup.

\begin{lemma}\cite[Corollary 9.6]{Das}\label{Das:9.6}
Let $X$ be a normal projective variety over $k$ and $x\in X$ is a regular closed point. 
Let $D$ be an effective cartier divisor passing through $x$ and $C$ be a curve on $X$ passing through $x$, such that $C$ is not in support of $D$. 
Then
$$D\cdot_kC\geq\frac{1}{[k(x):k]}({\rm mult}_{x/k}D)({\rm mult}_{x/k}C) .$$
\end{lemma}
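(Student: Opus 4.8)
The plan is to prove a Bézout-type inequality (Lemma~\ref{Das:9.6}) relating the intersection number $D \cdot_k C$ to the product of the local multiplicities at $x$, reducing to the geometry of the blow-up $\pi : \tilde{X} \to X$ at $x$. First I would pass to the blow-up and write the strict transforms $\tilde{D}$ and $\tilde{C}$ of $D$ and $C$ respectively. By Lemma~\ref{Das:9.4}, pulling back $D$ gives the relation $\pi^* D = \tilde{D} + l E$, where $l$ is the largest integer with a local equation of $D$ lying in $m_x^l$, so that ${\rm mult}_{x/k} D = l\,[k(x):k]$. The key is to compute $D \cdot_k C$ using the projection formula: since intersection numbers are preserved by $\pi^*$ on the curve, $D \cdot_k C = \pi^* D \cdot_k \tilde{C} = (\tilde{D} + lE) \cdot_k \tilde{C} = \tilde{D} \cdot_k \tilde{C} + l\,(E \cdot_k \tilde{C})$.

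The next step is to control each of the two resulting terms. For the second term, Remark~\ref{Das:9.5} identifies $E \cdot_k \tilde{C} = {\rm mult}_{x/k} C$, which is precisely the multiplicity factor I want to see. For the first term, the hypothesis that $C$ is not contained in the support of $D$ is exactly what guarantees that $\tilde{D}$ and $\tilde{C}$ meet properly (or at worst share no common component), so that $\tilde{D} \cdot_k \tilde{C} \ge 0$ by effectivity and the fact that nef/effective divisors intersect curves not contained in them nonnegatively. Dropping this nonnegative term yields
\[
D \cdot_k C \;\ge\; l\,(E \cdot_k \tilde{C}) \;=\; l\cdot {\rm mult}_{x/k} C.
\]

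Finally I would substitute $l = \dfrac{{\rm mult}_{x/k} D}{[k(x):k]}$ from Lemma~\ref{Das:9.4} to convert the factor $l$ into the desired form, giving
\[
D \cdot_k C \;\ge\; \frac{1}{[k(x):k]}\,({\rm mult}_{x/k} D)\,({\rm mult}_{x/k} C),
\]
which is exactly the claimed inequality. The main obstacle I anticipate is justifying the two intersection-theoretic facts over a general field $k$: namely, the validity of the projection formula $D \cdot_k C = \pi^* D \cdot_k \tilde{C}$ and the nonnegativity $\tilde{D} \cdot_k \tilde{C} \ge 0$ in the scheme-theoretic degree formalism of $\deg_k$. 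Over an algebraically closed field these are standard, but here one must check that the degree $\deg_k W|_C$ behaves functorially under pullback along $\pi$ and that the local-at-$x$ contributions split off cleanly; the regularity hypothesis on $x$ is what makes the blow-up well-behaved and the formula ${\rm mult}_{x/k} X = [k(x):k]$ available, so the residue-field factor $[k(x):k]$ tracks correctly through the computation.
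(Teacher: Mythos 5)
The paper does not prove this lemma; it is quoted verbatim from \cite[Corollary 9.6]{Das} as an imported tool. Your argument is correct and is essentially the standard proof (and the one given in the cited reference): decompose $\pi^*D=\tilde{D}+lE$ via Lemma~\ref{Das:9.4}, use the projection formula $D\cdot_kC=\pi^*D\cdot_k\tilde{C}$ (valid since $\pi|_{\tilde{C}}:\tilde{C}\to C$ is birational), identify $E\cdot_k\tilde{C}={\rm mult}_{x/k}C$ by Remark~\ref{Das:9.5}, and discard the term $\tilde{D}\cdot_k\tilde{C}\ge 0$, which is nonnegative because $\tilde{C}$ is not contained in ${\rm Supp}(\tilde{D})$ so the restriction $\tilde{D}|_{\tilde{C}}$ is an effective divisor on the curve $\tilde{C}$ and $\deg_k$ of an effective divisor is nonnegative.
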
 

Now we state the Riemann Roch theorem for surfaces over an arbitrary field.

\begin{theorem}[\cite{Tan}, Theorem 2.10]\label{Tan:2.10}
Let $X$ be a smooth projective surface over a field $k$. 
Let $D$ be a $\mathbb{Z}$ divisor on $X$. 
Then $\chi(X,D)=\chi(X,O_X)+\frac{1}{2}D\cdot_k(D-K_X)$.
\end{theorem}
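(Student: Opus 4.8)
The plan is to reduce the statement to the classical Riemann--Roch theorem for smooth projective surfaces over an algebraically closed field, by base change to $\bar k$. Since $\mathrm{char}\,k = 0$, the extension $\bar k/k$ is separable, and this is precisely the feature that makes every term in the formula behave well under the base change $X_{\bar k} = X \times_k \bar k$. As $X$ is assumed smooth and geometrically integral, $X_{\bar k}$ is again a smooth projective integral surface over $\bar k$, so the classical identity $\chi(X_{\bar k}, D_{\bar k}) = \chi(X_{\bar k}, \mathcal{O}_{X_{\bar k}}) + \tfrac{1}{2} D_{\bar k}\cdot_{\bar k}(D_{\bar k} - K_{X_{\bar k}})$ is available, and it then suffices to match the three ingredients on the two sides.

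First I would handle the Euler characteristics. For a projective variety over $k$ and a coherent sheaf $\mathcal{F}$, flat base change along the faithfully flat map $k \to \bar k$ gives $H^i(X_{\bar k}, \mathcal{F}_{\bar k}) \cong H^i(X, \mathcal{F}) \otimes_k \bar k$, whence $\dim_{\bar k} H^i(X_{\bar k}, \mathcal{F}_{\bar k}) = \dim_k H^i(X, \mathcal{F})$. Applying this to $\mathcal{F} = \mathcal{O}_X(D)$ and to $\mathcal{F} = \mathcal{O}_X$ shows $\chi(X, D) = \chi(X_{\bar k}, D_{\bar k})$ and $\chi(X, \mathcal{O}_X) = \chi(X_{\bar k}, \mathcal{O}_{X_{\bar k}})$. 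For the canonical divisor I would invoke that the canonical sheaf commutes with smooth base change, $\omega_{X_{\bar k}/\bar k} \cong (\omega_{X/k})_{\bar k}$, so that $(D - K_X)_{\bar k} = D_{\bar k} - K_{X_{\bar k}}$.

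The crux is the compatibility of the intersection pairing with base change, i.e.\ $D\cdot_k D' = D_{\bar k}\cdot_{\bar k} D'_{\bar k}$ for divisors on $X$; by bilinearity it is enough to treat $D \cdot_k C$ for a curve $C$, where by definition $D\cdot_k C = \deg_k(D|_C)$ and $\deg_k W = \sum_x (\mathrm{mult}_x W)[k(x):k]$. Here the characteristic-zero hypothesis is essential: for a closed point $x \in C$ the separable algebra $k(x)\otimes_k \bar k \cong \bar k^{[k(x):k]}$ shows that $x$ splits into exactly $[k(x):k]$ reduced $\bar k$-points of $C_{\bar k}$, each of residue degree $1$, so the factor $[k(x):k]$ weighting $x$ in $\deg_k$ is recovered as the number of points in the fibre; hence $\deg_k W = \deg_{\bar k} W_{\bar k}$ and $D\cdot_k C = D_{\bar k}\cdot_{\bar k} C_{\bar k}$, the right-hand side being computed by linearity on the possibly reducible cycle $C_{\bar k}$. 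I expect this normalization check to be the main obstacle, since it is where the $[k(x):k]$-weights built into $\deg_k$ and $\mathrm{mult}_{x/k}$ must be reconciled with the splitting of closed points; once it is in place, substituting the three matched quantities into the classical formula over $\bar k$ yields the statement over $k$. An alternative, field-intrinsic route that avoids base change would combine Snapper's theorem (that $D \mapsto \chi(X,D)$ is a numerical polynomial of degree $\le 2$ whose quadratic part is $\tfrac12 D\cdot_k D$) with Serre duality $\chi(X,D) = \chi(X, K_X - D)$ to pin down the linear term as $-\tfrac12 K_X\cdot_k D$; this again hinges on verifying that the quadratic coefficient produced by Snapper carries exactly the $\cdot_k$ normalization of the paper.
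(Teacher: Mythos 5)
The paper gives no proof of this statement: it is imported verbatim from Tanaka's paper as \cite[Theorem 2.10]{Tan}, so there is no internal argument to compare against. Your reduction to the classical Riemann--Roch theorem over $\bar k$ is sound under the paper's standing hypothesis that ${\rm char}\,k=0$: flat base change for cohomology gives the equality of Euler characteristics, $\omega_{X/k}$ commutes with base change, and the compatibility $D\cdot_k C = D_{\bar k}\cdot_{\bar k}C_{\bar k}$ does hold. For that last point your splitting argument via $k(x)\otimes_k\bar k\cong \bar k^{[k(x):k]}$ is correct as far as it goes, but it only directly handles the reduced, smooth-point case; at singular points of $C$ and for higher multiplicities one should either run the length computation on $O_{C,x}/(f)\otimes_k\bar k$ or, more cleanly, use the characterization $\deg_k\mathcal{L} = \chi(C,\mathcal{L})-\chi(C,O_C)$ (available in \cite{Liu}), which makes the base-change invariance of the degree an immediate consequence of flat base change and sidesteps the point-counting entirely. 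The one substantive caveat is that Tanaka's theorem is stated and proved for surfaces over arbitrary, possibly imperfect, fields, where base change to $\bar k$ is not an option ($\bar k/k$ inseparable, $X_{\bar k}$ possibly non-reduced, closed points not splitting into $[k(x):k]$ reduced points); his proof follows essentially your alternative intrinsic route (numerical polynomials plus Serre duality). So your argument proves the statement in the generality the paper actually uses it, but not in the generality in which it is stated and cited.
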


We now define Seshadri constants on varieties over arbitrary fields using intersection theory developed above.
We see that, in analogy with the Definitions \ref{def1} and \ref{def2} over algebraically closed fields, we have an equivalent formulation of local Seshadri constants over arbitrary fields.

\begin{defn}\cite[Definition 9.7]{Das}\label{Das:9.7}
Let $X$ be a smooth projective variety over a field $k$ and let $x \in X$ be a closed point. 
Let $\pi:\tilde{X}\to X$ be the blow-up of $X$ at $x$ with exceptional divisor $E$ and let $\mathcal{L}$ be a nef line bundle on $X$. 
We define the Seshadri constant at $x$ denoted by $\epsilon(X,\mathcal{L},x)$ as
$$\epsilon(X,\mathcal{L},x):=\sup\{\epsilon\geq0:\pi^*\mathcal{L}-\epsilon E\text{ is nef on } \tilde{X}\}.$$
\end{defn}

We also define the multi-point Seshadri constant in the similar way.

\begin{defn}\cite[Definition 9.7]{Das}\label{Das:multi:9.7}
Let $X$ be a smooth projective variety over a field $k$ and let $x_1,x_2\cdots,x_t \in X$ be closed points. 
Let $\pi:\tilde{X}\to X$ be the blow-up of $X$ at $\{x_1,x_2\cdots,x_t \}$  with exceptional divisor $E$ and let $\mathcal{L}$ be a nef line bundle on $X$. 
We define the multipoint Seshadri constant at $x_1, x_2\cdots,  x_t$ denoted by $\epsilon(X,\mathcal{L}, x_1,x_2\cdots, x_t)$ as
$$\epsilon(X,\mathcal{L}, x_1,x_2\cdots, x_t):=\sup\{\epsilon\geq0:\pi^*\mathcal{L}-\epsilon E\text{ is nef on } \tilde{X}\}.$$
\end{defn}

In the above setup, we have the following equivalent definition of the Seshadri constant at a closed point $x\in X$. 
For the sake of completeness we reproduce the proof from \cite[Section 9]{Das}.

\begin{prop}\cite[Lemma 9.8]{Das}\label{Das:9.8}
We have $\epsilon(X,\mathcal{L},x)=inf\{\frac{\mathcal{L}\cdot_kC}{{\rm mult}_{x/k}C}\}$ where the infimum is taken over all integral curve $C$ in $X$ passing through $x$.
\end{prop}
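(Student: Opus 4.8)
The plan is to unwind the nefness condition of Definition \ref{Das:9.7} into a test that can be carried out one curve at a time, and then to match, for each integral curve through $x$, the resulting inequality against the quotient $\frac{\mathcal{L}\cdot_k C}{{\rm mult}_{x/k}C}$. By definition, $\pi^*\mathcal{L}-\epsilon E$ is nef on $\tilde{X}$ exactly when $(\pi^*\mathcal{L}-\epsilon E)\cdot_k\Gamma\geq 0$ for every integral curve $\Gamma\subseteq\tilde{X}$. So I would first record that it suffices to verify this inequality on two classes of curves: those contained in the exceptional divisor $E$, and the strict transforms $\tilde{C}$ of integral curves $C\subseteq X$. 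Indeed, since $\pi$ is an isomorphism away from $E$, any integral curve $\Gamma\subseteq\tilde X$ either lies inside $E$ or maps birationally onto an integral curve $C=\pi(\Gamma)\subseteq X$, in which case $\Gamma=\tilde C$.

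For a curve $\Gamma\subseteq E$, the projection formula gives $\pi^*\mathcal{L}\cdot_k\Gamma=\mathcal{L}\cdot_k\pi_*\Gamma=0$, because $\pi$ contracts $\Gamma$ to the point $x$ and hence $\pi_*\Gamma=0$ as a cycle. As $x$ is a regular point, $E\cong\mathbb{P}^{n-1}_{k(x)}$ with $O_E(E)\cong O_{\mathbb{P}^{n-1}_{k(x)}}(-1)$, so that $E\cdot_k\Gamma=\deg_k\big(O_E(E)|_\Gamma\big)\leq 0$. Therefore $(\pi^*\mathcal{L}-\epsilon E)\cdot_k\Gamma=-\epsilon\,E\cdot_k\Gamma\geq 0$ holds automatically for every $\epsilon\geq 0$, and curves inside $E$ impose no constraint. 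For a strict transform $\tilde C$, the projection formula yields $\pi^*\mathcal{L}\cdot_k\tilde C=\mathcal{L}\cdot_k C$, while Remark \ref{Das:9.5} gives $E\cdot_k\tilde C={\rm mult}_{x/k}C$, whence
$$(\pi^*\mathcal{L}-\epsilon E)\cdot_k\tilde C=\mathcal{L}\cdot_k C-\epsilon\,{\rm mult}_{x/k}C.$$
If $C$ does not pass through $x$, then ${\rm mult}_{x/k}C=0$ and this equals $\mathcal{L}\cdot_k C\geq 0$ by nefness of $\mathcal{L}$, so no constraint arises; if $C$ passes through $x$, the inequality is equivalent to $\epsilon\leq \frac{\mathcal{L}\cdot_k C}{{\rm mult}_{x/k}C}$.

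Combining the two cases, $\pi^*\mathcal{L}-\epsilon E$ is nef if and only if $0\leq\epsilon\leq\inf_C\frac{\mathcal{L}\cdot_k C}{{\rm mult}_{x/k}C}$, the infimum running over integral curves $C\subseteq X$ through $x$; taking the supremum over admissible $\epsilon$ then gives $\epsilon(X,\mathcal{L},x)=\inf_C\frac{\mathcal{L}\cdot_k C}{{\rm mult}_{x/k}C}$, as claimed. I expect the main obstacle to be the over-$k$ intersection-theoretic bookkeeping rather than the underlying geometry: one must be certain that the projection formula holds for the pairing $\cdot_k$ under pushforward along $\pi$ (so that contracted curves pair to zero with $\pi^*\mathcal{L}$), and that the identification $E\cong\mathbb{P}^{n-1}_{k(x)}$ with $O_E(E)\cong O(-1)$, together with the $[k(x):k]$-weighted degrees built into $\deg_k$, genuinely forces $E\cdot_k\Gamma\leq 0$ for curves $\Gamma\subseteq E$. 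These are precisely the facts packaged in the intersection theory of \cite{Das, Liu} recalled above; once they are in hand, the equivalence is formal.
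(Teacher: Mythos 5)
Your proposal is correct and follows essentially the same route as the paper's proof: reduce the nefness of $\pi^*\mathcal{L}-\epsilon E$ to testing on strict transforms, then use the projection formula together with Remark \ref{Das:9.5} to convert $(\pi^*\mathcal{L}-\epsilon E)\cdot_k\tilde{C}\geq 0$ into $\epsilon\leq\frac{\mathcal{L}\cdot_k C}{{\rm mult}_{x/k}C}$. The only difference is that you explicitly justify the reduction step (curves inside $E$ satisfy $E\cdot_k\Gamma<0$ and $\pi^*\mathcal{L}\cdot_k\Gamma=0$, so they impose no constraint), which the paper simply asserts; this is a welcome addition rather than a deviation.
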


\begin{proof}
We observe that to check the nef property of $\pi^*\mathcal{L}-\epsilon E$ on $\tilde{X}$ it is enough to check it for all strict transforms of the curves in $X$. 
Now it follows that $\pi^*\mathcal{L}-\epsilon E$ is nef on $\tilde{X}$ if and only if $(\pi^*\mathcal{L}-\epsilon E)\cdot_k\tilde{C}\geq0$ where $\tilde{C}$ denotes the strict transform of an integral curve $C$ in $X$ passing through $x$ on $\tilde{X}$. 
Now, from Remark \ref{Das:9.5}, we have $\pi^*\mathcal{L}\cdot_k\tilde{C}\geq\epsilon \;{\rm mult}_{x/k}C$.
Hence $\frac{\mathcal{L}\cdot_kC}{{\rm mult}_{x/k}C}\geq\epsilon$ if and only if $\pi^*\mathcal{L}-\epsilon E$ is nef on $\tilde{X}$.
\end{proof}

\begin{defn}
Let $X$ be a smooth projective variety over $k$ and $\mathcal{L}$ is a nef line $k$-bundle on $X$. Then we define following three kinds of Seshadri constants:

\noindent {\rm (i)} The {\rm Seshadri constant of the line bundle $\mathcal{L}$}:
$$\epsilon(X,\mathcal{L}):=inf_{x\in X}\epsilon(X,\mathcal{L}, x) ,$$
\noindent {\rm (ii)} The {\rm Seshadri constant at the point $x$}:
$$\epsilon(X, x):=inf_{\mathcal{L} \text{ is ample} }\epsilon(X,\mathcal{L}, x) ,$$
\noindent {\rm (iii)} The {\rm global Seshadri constant of the variety $X$}:
$$\epsilon(X):=inf_{\mathcal{L} \text{ is ample}}\epsilon(X,\mathcal{L})=inf_{x\in X}\epsilon(X, x) .$$

\end{defn}

	
\section{Seshadri constants and base extensions}
Now we consider the following setup.

Let $X$ be a smooth projective variety defined over a characteristic zero field $k$ with $H^0(X,O_X)=k$ and let $K$ be an extension of $k$. 
We consider the following fibered diagram:

\begin{center}
\begin{tikzcd}[row sep=huge, column sep=huge]
X_K\arrow[r, "h"] \arrow[d, "f"'] & {\rm spec}(K) \arrow[d] \\
X\arrow[r, "g"'] & {\rm spec}(k)
\end{tikzcd}
\end{center}
where $f$ is faithfully flat being the base change of a faithfully flat map, the natural map from spec$(K)$ to spec$(k)$. 
Let $x_0 \in X$ be a $k$ ratinal point. 
We have $f^{-1}(x_0)=y_0$ for some unique closed point $y_0\in X_K$.

\begin{lemma}\label{lem3.2}
With the above notations, let $(O_{X,x_0},m_{x_0})$ and $(O_{X_K,y_0},m_{y_0})$ denote the local rings at $x_0$ and $y_0$ respectively.
If $f':O_{X,x_0}\to O_{X_K,y_0}$ denotes the map induced by $f$ on the local rings then $m_{x_0}O_{X_K,y_0}=m_{y_0}$.
\end{lemma}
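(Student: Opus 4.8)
The plan is to reduce the statement to a transparent computation with the rings $R:=O_{X,x_0}$ and $R\otimes_k K$, exploiting crucially that $x_0$ is a $k$-rational point. First I would pass to an affine open neighbourhood $\operatorname{Spec}(A)$ of $x_0$ in $X$, so that $x_0$ corresponds to a maximal ideal $\mathfrak m\subset A$ with $A/\mathfrak m=k(x_0)=k$ and $R=A_{\mathfrak m}$, $m_{x_0}=\mathfrak m R$. Pulling back along $f$, the corresponding chart of $X_K$ is $\operatorname{Spec}(A\otimes_k K)$, with $f$ induced by $a\mapsto a\otimes 1$. The point $y_0$ then corresponds to a prime $\mathfrak q\subset A\otimes_k K$ lying over $\mathfrak m$, and the local ring $O_{X_K,y_0}$ is the localization $(A\otimes_k K)_{\mathfrak q}=(R\otimes_k K)_{\mathfrak q}$, the second identification holding because $\mathfrak q$ meets the image of $A\setminus\mathfrak m$ trivially. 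Everything is thereby reduced to identifying $\mathfrak q$ explicitly.

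The key step is to show that $\mathfrak q=m_{x_0}(R\otimes_k K)$, i.e. that this extended ideal is already maximal. Here the $k$-rationality of $x_0$ does all the work: since $R/m_{x_0}=k$, right-exactness of the tensor product gives $(R\otimes_k K)/m_{x_0}(R\otimes_k K)\cong(R/m_{x_0})\otimes_k K=k\otimes_k K=K$, which is a field. Hence $m_{x_0}(R\otimes_k K)$ is a maximal ideal; as it manifestly lies over $m_{x_0}$, and $y_0$ is by hypothesis the unique point of the fibre $f^{-1}(x_0)$, it must coincide with $\mathfrak q$.

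Finally I would pass to the localization and conclude $m_{y_0}=\mathfrak q\,(R\otimes_k K)_{\mathfrak q}=m_{x_0}(R\otimes_k K)_{\mathfrak q}=m_{x_0}O_{X_K,y_0}$, the last equality because the extended ideal is generated by the images $f'(a)$ of $a\in m_{x_0}$ and $\mathfrak q=m_{x_0}(R\otimes_k K)$. This is exactly the asserted equality. The only genuinely delicate point, and the heart of the matter, is the identification $\mathfrak q=m_{x_0}(R\otimes_k K)$: this is precisely where the hypothesis that $x_0$ is $k$-rational is indispensable, for at a closed point with a nontrivial residue extension $k(x_0)/k$ the ring $k(x_0)\otimes_k K$ need not be a field (it may fail to be reduced, or may split into several factors), so the extended maximal ideal can be strictly contained in $m_{y_0}$ and the conclusion breaks down. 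Faithful flatness of $f$ enters only to guarantee that $f'$ is well behaved; the substance of the argument rests solely on the elementary fact $k\otimes_k K=K$.
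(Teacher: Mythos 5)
Your proof is correct, and it takes a genuinely different route from the paper. The paper disposes of this lemma in one line: $f$ is unramified, being the base change of the (purportedly) unramified morphism $\operatorname{Spec}(K)\to\operatorname{Spec}(k)$, and the identity $m_{x_0}O_{X_K,y_0}=m_{y_0}$ is part of what unramifiedness means. Your argument instead computes the fibre directly: on an affine chart the fibre over $x_0$ is $\operatorname{Spec}\bigl((A/\mathfrak m)\otimes_k K\bigr)=\operatorname{Spec}(K)$, so the extended ideal is already maximal and must be the prime defining $y_0$; localizing finishes the proof. What your approach buys is twofold. First, it isolates exactly where $k$-rationality enters ($k(x_0)\otimes_k K=K$ is a field), and correctly flags that the conclusion can fail at a non-rational closed point --- a caveat the paper's appeal to a general permanence property obscures. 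Second, it is arguably more robust than the paper's own justification: for $K/k$ not algebraic (e.g.\ $K=k(t)$) the morphism $\operatorname{Spec}(K)\to\operatorname{Spec}(k)$ is not of finite type and has $\Omega^1_{K/k}\neq 0$, so calling it ``unramified'' is at best an abuse of language, whereas your fibre computation goes through verbatim for an arbitrary field extension. The only cosmetic point to tighten is the identification $O_{X_K,y_0}=(R\otimes_k K)_{\mathfrak q}$, which you do justify (elements of $A\setminus\mathfrak m$ land outside $\mathfrak q$ since $\mathfrak q\cap A=\mathfrak m$); the last sentence about faithful flatness is not actually needed and could be dropped.
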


\begin{proof}
This follows because $f$ is an unramified morphism being a base extension of an unramified morphism.
\end{proof}

\begin{lemma}[\cite{Donu}, Lemma C.5]\label{Donu:C.5}
With $X$ and $K$ as above, a line bundle $\mathcal{L}$ on $X$ is nef if and only if the line bundle $\mathcal{L}\otimes_kK$ on $X\times_kK$ is nef.
\end{lemma}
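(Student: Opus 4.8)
The plan is to reduce nefness to the curve-by-curve numerical criterion underlying Proposition \ref{Das:9.8}---namely that a line bundle on a projective variety over $k$ is nef if and only if it has nonnegative degree on every integral curve---and then to exploit the invariance of Euler characteristics under the faithfully flat base change $f\colon X_K\to X$. The one homological input I would isolate first is the \emph{compatibility of intersection numbers under base extension}. Since $k$ has characteristic zero it is perfect, so for an integral curve $C\subset X$ the base change $C_K:=C\times_k K$ is reduced and splits into finitely many integral curves $D_1,\dots,D_r$ over $K$. Using that $\dim_k H^i(X,\mathcal F)=\dim_K H^i(X_K,\mathcal F\otimes_k K)$ for every coherent sheaf $\mathcal F$, together with the Euler-characteristic (Snapper) description of the pairing $\cdot_k$ defined above, I would deduce the identity $\mathcal L\cdot_k C=\sum_{i=1}^r \mathcal L_K\cdot_K D_i$, where $\mathcal L_K=\mathcal L\otimes_k K=f^*\mathcal L$.

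Granting this identity, the implication ``$\mathcal L_K$ nef $\Rightarrow$ $\mathcal L$ nef'' is immediate: for any integral curve $C\subset X$ each summand $\mathcal L_K\cdot_K D_i$ is nonnegative by hypothesis, hence $\mathcal L\cdot_k C=\sum_i \mathcal L_K\cdot_K D_i\ge 0$, and the curve criterion gives that $\mathcal L$ is nef.

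The forward implication ``$\mathcal L$ nef $\Rightarrow$ $\mathcal L_K$ nef'' is the real content, and here I would argue by reduction. Let $D\subset X_K$ be an arbitrary integral curve; I must show $\mathcal L_K\cdot_K D\ge 0$. First, by a standard spreading-out argument (\cite{Liu}) the curve $D$ and the restriction $\mathcal L_K|_D$ are already defined over a subextension $k\subseteq k_0\subseteq K$ with $k_0$ finitely generated over $k$, and by the base-change compatibility the intersection number is unchanged on passing from $k_0$ up to $K$; so I may assume $K=k_0$ is finitely generated, hence algebraic over a purely transcendental extension $k(t_1,\dots,t_d)$. This splits the problem into two cases that I would treat in a tower. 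In the \emph{finite} (algebraic) case $f\colon X_K\to X$ is finite, hence proper, so the set-theoretic image $C:=f(D)$ is an integral curve in $X$ and $f|_D\colon D\to C$ is a finite surjective morphism; the projection formula for degrees under a finite morphism of curves (\cite{Liu}) then expresses $\mathcal L_K\cdot_K D$ as a \emph{positive} multiple of $\mathcal L\cdot_k C$, which is nonnegative because $\mathcal L$ is nef. In the \emph{purely transcendental} case I would reduce one variable at a time: a curve over $k(t)$ spreads out to a closed subscheme $\mathcal D\subset X\times_k U$, flat over a nonempty open $U\subseteq \mathbb A^1_k$, with $D$ as its generic fibre; constancy of the Hilbert polynomial (equivalently, of the Euler characteristic) in the flat family shows that $\mathcal L\cdot \mathcal D_s$ is independent of $s$ and equals $\mathcal L_K\cdot_K D$, while for a closed point $s\in U$ the fibre $\mathcal D_s$ is a curve over the finite extension $\kappa(s)/k$, so $\mathcal L\cdot \mathcal D_s\ge 0$ by the finite case already handled.

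I expect the main obstacle to be the forward implication, and within it the purely transcendental step: making the spreading-out precise (choosing $U$ so that $\mathcal D\to U$ is flat with integral fibres) and rigorously identifying the generic intersection number with the specialised one through constancy of Euler characteristics in the family. A secondary technical point, though softened by $k$ being perfect, is the careful bookkeeping of components and degrees in the base-change identity $\mathcal L\cdot_k C=\sum_i \mathcal L_K\cdot_K D_i$, which must be made compatible with the normalisations of $\deg_k$ and $\cdot_k$ adopted above.
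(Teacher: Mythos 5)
The paper offers no proof of this lemma at all: it is imported verbatim as \cite[Lemma C.5]{Donu} (Keeler's appendix to Arapura's paper), so there is no internal argument to measure yours against. Your blind proof is essentially the standard argument for the invariance of nefness under base field extension, and I see no genuine gap in it. The easy direction via the decomposition $\mathcal L\cdot_k C=\sum_i\mathcal L_K\cdot_K D_i$ is sound, provided you really do route it through the Euler-characteristic (Snapper) description of the degree as you indicate --- the components $D_i$ of $C_K$ can meet one another, so naive additivity of the divisor-theoretic $\deg_k$ of the paper over components is not immediate, and you must also reconcile that definition with $\chi(\mathcal L|_C)-\chi(\mathcal O_C)$ via Riemann--Roch for singular curves. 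The hard direction is handled by the correct devissage: spread $D$ out over a finitely generated subextension, split that extension (char.\ $0$, hence separably generated) into a purely transcendental tower capped by a finite extension, use the projection formula for the finite step (keeping track of the factor $[K:k]$ relating $\deg_K$ to $\deg_k$), and use flatness of the closure of $D$ over an open $U\subseteq\mathbb A^1_k$ together with local constancy of Euler characteristics to specialise the transcendental step to closed fibres, where the finite case applies even if the fibre $\mathcal D_s$ is not integral (its degree is a nonnegative combination over its components). This is, as far as I can tell, the same specialisation strategy underlying the cited reference; what your write-up buys is a self-contained proof within the paper's own intersection-theoretic conventions, at the cost of the bookkeeping you have already flagged.
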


\begin{theorem}\label{Main1}
Let $\mathcal{L}$ be a nef line bundle on $X$. 
Then $\epsilon(X_K,f^*\mathcal{L},y_0)=\epsilon(X,\mathcal{L},x_0)$.
\end{theorem}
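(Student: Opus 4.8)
The plan is to work directly with the blow-up formulation of the Seshadri constant (Definition \ref{Das:9.7}) rather than with the curve formulation of Proposition \ref{Das:9.8}, since curves on $X_K$ need not descend to curves on $X$ and their $K$-intersection numbers are awkward to compare term by term. Writing $\pi : \tilde X \to X$ for the blow-up of $X$ at $x_0$ with exceptional divisor $E$, the first step is to identify the blow-up of $X_K$ at $y_0$ with the base extension of $\pi$. Since $x_0$ is $k$-rational its ideal sheaf is $m_{x_0}$, and by Lemma \ref{lem3.2} we have $m_{x_0}\mathcal{O}_{X_K,y_0}=m_{y_0}$; as $f$ is flat, Lemma \ref{Liu:8.1.12} then gives a canonical isomorphism $\widetilde{X_K}\cong \tilde X\times_X X_K = \tilde X\times_k K$ identifying the blow-up $\pi_K:\widetilde{X_K}\to X_K$ at $y_0$ with the base change of $\pi$. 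Under this identification the base-change map $\tilde f:\widetilde{X_K}\to\tilde X$ is flat, the exceptional divisor $E_K$ of $\pi_K$ is the pullback $\tilde f^*E$, and commutativity of the resulting square yields the key identity $\pi_K^*(f^*\mathcal{L})-\epsilon E_K=\tilde f^*(\pi^*\mathcal{L}-\epsilon E)$ for every $\epsilon\ge 0$.

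With this in hand the problem reduces to a statement about nefness under base extension. Let $S=\{\epsilon\ge 0:\pi^*\mathcal{L}-\epsilon E\text{ is nef on }\tilde X\}$ and $S_K=\{\epsilon\ge 0:\pi_K^*(f^*\mathcal{L})-\epsilon E_K\text{ is nef on }\widetilde{X_K}\}$, so that by definition $\epsilon(X,\mathcal{L},x_0)=\sup S$ and $\epsilon(X_K,f^*\mathcal{L},y_0)=\sup S_K$. I would show $S=S_K$. For a rational value $\epsilon=a/b$ with $a,b\in\mathbb{Z}_{>0}$, clearing denominators turns $\pi^*\mathcal{L}-\epsilon E$ into the genuine line bundle $(\pi^*\mathcal{L})^{\otimes b}\otimes\mathcal{O}_{\tilde X}(-aE)$, whose nefness is equivalent to that of $\pi^*\mathcal{L} - \epsilon E$; since $\widetilde{X_K}=\tilde X\times_k K$, Lemma \ref{Donu:C.5} says this line bundle is nef on $\tilde X$ if and only if its pullback by $\tilde f$ is nef on $\widetilde{X_K}$, and by the identity above the latter is exactly the nefness of $\pi_K^*(f^*\mathcal{L})-\epsilon E_K$. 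Hence $S\cap\mathbb{Q}=S_K\cap\mathbb{Q}$.

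To pass from rational to all real $\epsilon$ I would observe that both $S$ and $S_K$ are downward-closed intervals containing $0$: the set of $\epsilon$ for which $\pi^*\mathcal{L}-\epsilon E$ is nef is cut out by the conditions $(\pi^*\mathcal{L}-\epsilon E)\cdot_k C\ge 0$ over all integral curves $C$, each of which either imposes an upper bound on $\epsilon$ or is vacuous for curves contracted into $E$, so that $S=[0,\sup S]$ and similarly $S_K=[0,\sup S_K]$. An interval of this form is determined by its rational points, so $S\cap\mathbb{Q}=S_K\cap\mathbb{Q}$ forces $\sup S=\sup S_K$, which is the desired equality $\epsilon(X_K,f^*\mathcal{L},y_0)=\epsilon(X,\mathcal{L},x_0)$.

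I expect the main obstacle to be the first step: verifying cleanly that the base change of the blow-up at the $k$-rational point $x_0$ really is the blow-up at $y_0$ with exceptional divisor $\tilde f^*E$, i.e. that the transported center $m_{x_0}\mathcal{O}_{X_K}$ is the reduced ideal of $y_0$. This is where the $k$-rationality of $x_0$ is essential, since it guarantees via Lemma \ref{lem3.2} that $y_0$ is a single $K$-rational point and that no splitting or thickening of the center occurs; it is precisely the hypothesis that makes the comparison possible. The remaining steps, namely the rational-denominator reduction and the density argument, are then routine given Lemmas \ref{Liu:8.1.12} and \ref{Donu:C.5} together with the convexity of the nef cone.
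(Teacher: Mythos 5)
Your proposal is correct and follows essentially the same route as the paper: identify the blow-up of $X_K$ at $y_0$ with the base change of the blow-up of $X$ at $x_0$ via Lemmas \ref{lem3.2} and \ref{Liu:8.1.12}, rewrite $\pi_K^*(f^*\mathcal{L})-\epsilon E_K$ as the pullback of $\pi^*\mathcal{L}-\epsilon E$, and invoke Lemma \ref{Donu:C.5}. The only difference is that you take the extra (and welcome) care of reducing to rational $\epsilon$ and then using the interval structure of the nef locus, since Lemma \ref{Donu:C.5} is stated for honest line bundles, whereas the paper applies it directly to the real-coefficient class $\pi^*\mathcal{L}-\lambda E$.
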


\begin{proof}
First we notice from the above lemma that $f^*\mathcal{L}$ is a nef line bundle on $X_K$ so $\epsilon(X_K,f^*\mathcal{L},y_0)$ is well defined.
We take $\mathcal{I}$ to be the ideal sheaf corresponding to the inclusion $\{x_0\}\xhookrightarrow{}X$ then applying  Lemma \ref{lem3.2} and Lemma \ref{Liu:8.1.12} we have the following fibered diagram:
$$\begin{tikzcd}[row sep=huge, column sep=huge]
\tilde{X_K}\arrow[r, "f_{K}"] \arrow[d, "\pi_{K}"'] &\tilde{X} \arrow[d, "\pi"] \\
X_{K}\arrow[r, "f"'] & X
\end{tikzcd}$$
where $\pi$ is the blowup of $X$ at $x_0$ and $\pi_K$ is the blow up of $X_{K}$ at $y_0$. 
We then have the following commutative diagram of Picard groups.
$$\begin{tikzcd}[row sep=huge, column sep=huge]
Pic(X)\arrow[r, "f^*"] \arrow[d, "\pi^*"'] &Pic(X_K) \arrow[d, "\pi^*_K"] \\
Pic(\tilde{X})\arrow[r, "f_K^*"'] & Pic(\tilde{X_K})
\end{tikzcd}$$
Now let $\lambda\geq 0$ be a real number such that $\pi_K^*f^*\mathcal{L}-\lambda f_K^*E$ is nef on $X_K$. 

Using commutativity of the diagram in Picard groups we observe that $\pi_K^*f^*\mathcal{L}-\lambda f_{K}^*E$ is nef on $X_{K}$ if and only if $f_{K}^*\pi^*\mathcal{L}-\lambda f_{K}^*E$ is nef on $X_K$. 
We have $f_{K}^*\pi^*\mathcal{L}-\lambda f_{K}^*E$ is nef on $X_K$ if and only if $f_{K}^*(\pi^*\mathcal{L}-\lambda E)$ is nef on $X_{K}$. 

Now using Lemma \ref{Donu:C.5} we get $f_{K}^*(\pi^*\mathcal{L}-\lambda E)$ is nef on $X_{K}$ if and only if $\pi^*\mathcal{L}-\lambda E$ is nef on $X$. 
Hence we get $\epsilon(X_{K},f^*\mathcal{L},y_0)=\epsilon(X,\mathcal{L},x_0)$.
\end{proof}

\begin{cor}\label{Cor1}
Let $X$ be a smooth projective variety defined over $k$ the Seshadri constant at a rational point can be computed over curves defined over $k$ passing through that point.
\end{cor}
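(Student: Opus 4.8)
The plan is to read off this statement from Theorem~\ref{Main1} in the special case $K=\overline{k}$, using Proposition~\ref{Das:9.8} to convert the nefness-based definition of the Seshadri constant into an infimum over curves. Fix a nef line bundle $\mathcal{L}$ on $X$ and a $k$-rational point $x_0\in X(k)$. First I would specialise the fibered diagram of Section~3 to $K=\overline{k}$. Since $x_0$ is $k$-rational its residue field is $k$, so the fiber $f^{-1}(x_0)=y_0$ is a single closed point of $X_{\overline{k}}$ with residue field $\overline{k}$, and the hypotheses of Theorem~\ref{Main1} hold. That theorem then gives
$$\epsilon(X_{\overline{k}},f^*\mathcal{L},y_0)=\epsilon(X,\mathcal{L},x_0).$$

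Next I would unwind each side via Proposition~\ref{Das:9.8}. The right-hand side equals the infimum of $\frac{\mathcal{L}\cdot_k C}{{\rm mult}_{x_0/k}C}$ taken over integral curves $C$ in $X$, that is, curves defined over $k$, passing through $x_0$; as $x_0\in X(k)$ we have $[k(x_0):k]=1$, so ${\rm mult}_{x_0/k}C$ is the ordinary multiplicity. The left-hand side equals the infimum of $\frac{f^*\mathcal{L}\cdot_{\overline{k}}C'}{{\rm mult}_{y_0/\overline{k}}C'}$ over all integral curves $C'$ in $X_{\overline{k}}$ through $y_0$; since $\overline{k}$ is algebraically closed this is precisely the Seshadri constant of Definition~\ref{def1}, the infimum over all geometric curves. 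Equating the two infima yields the assertion: at a $k$-rational point the geometric Seshadri constant is already attained by restricting the infimum to curves defined over $k$.

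The step carrying the actual content is not the bookkeeping above but the equality supplied by Theorem~\ref{Main1}: a priori there are many more geometric curves through $y_0$ than $k$-curves through $x_0$, so one must rule out the possibility that some geometric curve produces a strictly smaller ratio. This is exactly what the nefness characterisation achieves, since $\pi^*\mathcal{L}-\lambda E$ is nef over $k$ if and only if its base change is nef over $\overline{k}$ by Lemma~\ref{Donu:C.5}; I therefore expect the main subtlety to be conceptual rather than computational. Finally, I would flag that $k$-rationality is genuinely needed: it guarantees both that $f^{-1}(x_0)$ is a single point (so that the blow-up square and the induced square of Picard groups in the proof of Theorem~\ref{Main1} are available) and that the $k$-multiplicity coincides with the geometric multiplicity; for a closed point with a nontrivial residue extension the fiber splits into several geometric points and the single-point comparison breaks down.
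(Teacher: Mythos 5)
Your proposal is correct and follows exactly the paper's own (much terser) argument: the paper simply observes that Theorem~\ref{Main1} and Proposition~\ref{Das:9.8} together give the result, which is precisely your specialisation to $K=\overline{k}$ followed by unwinding both sides as infima over curves. Your version just makes explicit the bookkeeping that the paper leaves implicit.
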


\begin{proof}
Proposition \ref{Das:9.8} and Theorem \ref{Main1} are equivalent formulations of Seshadri constants. 
Hence the result follows.
\end{proof}


\section{Seshadri constants of finite etale covers}
In this section, we see relations between Seshadri constants of finite etale covers and those of the base space. 
Similar results have been proved in \cite{roth} but only over algebraically closed fields. 
We consider Seshadri constants and multi-point Seshadri constants over characteristics zero fields, not necessarily algebraically closed.

\begin{theorem}\label{Main3}
Let $Y$ and $Z$ be smooth projective varieties over a field $k$ and let $g:Y\to Z$ be a finite \'{e}tale morphism over the field $k$. 
Let $\mathcal{L}$ be a nef line bundle on $Z$ and $g^{-1}(z)=\{y_1, y_2,\cdots, y_r\}$ for some $z\in Z$ and $y_1, y_2,\cdots, y_r$ are distinct points in $Y$ then 
$$\epsilon(Y,g^*\mathcal{L},y_1, y_2,\cdots, y_r)=\epsilon(Z,\mathcal{L},z) .$$
\end{theorem}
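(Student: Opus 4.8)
The plan is to reduce the statement to two ingredients, in direct analogy with the proof of Theorem \ref{Main1}: the functoriality of blow-ups under flat morphisms (Lemma \ref{Liu:8.1.12}), and the fact that nefness is both preserved and reflected by pullback along a finite surjective morphism. The whole point is that the multi-point blow-up of $Y$ at the fibre $g^{-1}(z)$ is nothing but the base change of the single-point blow-up of $Z$ at $z$.

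First I would set up the fibered diagram of blow-ups. Since $g$ is \'etale it is flat, and since it is unramified the scheme-theoretic fibre $g^{-1}(z)$ is the reduced subscheme $\{y_1,\ldots,y_r\}$; hence if $\mathcal{I}_z$ denotes the ideal sheaf of $z$ in $Z$ then $\mathcal{I}_z\mathcal{O}_Y=\mathcal{I}_{\{y_1,\ldots,y_r\}}$. Applying Lemma \ref{Liu:8.1.12} to this flat base change yields $\tilde{Y}\simeq\tilde{Z}\times_Z Y$, where $\pi_Z:\tilde{Z}\to Z$ is the blow-up at $z$ with exceptional divisor $E_Z$ and $\pi_Y:\tilde{Y}\to Y$ is the blow-up at $\{y_1,\ldots,y_r\}$ with exceptional divisor $E_Y=\sum_{i=1}^r E_i$. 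Writing $\tilde{g}:\tilde{Y}\to\tilde{Z}$ for the induced projection, this is the base change of $g$, so $\tilde{g}$ is again finite \'etale. The compatibility of the two blow-ups shows $\tilde{g}^*E_Z=E_Y$; here unramifiedness is exactly what guarantees that the pullback of the reduced exceptional divisor stays reduced, so no $E_i$ appears with multiplicity.

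With this in place I would use commutativity of the associated diagram of Picard groups to compute, for every $\epsilon\geq0$,
$$\pi_Y^*g^*\mathcal{L}-\epsilon E_Y=\tilde{g}^*\pi_Z^*\mathcal{L}-\epsilon\,\tilde{g}^*E_Z=\tilde{g}^*\bigl(\pi_Z^*\mathcal{L}-\epsilon E_Z\bigr).$$
It then suffices to prove that $\tilde{g}^*(\pi_Z^*\mathcal{L}-\epsilon E_Z)$ is nef on $\tilde{Y}$ if and only if $\pi_Z^*\mathcal{L}-\epsilon E_Z$ is nef on $\tilde{Z}$; for then the two admissible sets of $\epsilon$ in Definitions \ref{Das:9.7} and \ref{Das:multi:9.7} coincide and the suprema defining the two Seshadri constants agree. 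The forward implication is immediate, as the pullback of a nef bundle along any morphism is nef. For the converse I would use that $\tilde{g}$ is finite and surjective: given any integral curve $C$ on $\tilde{Z}$, pick an integral curve $C'$ on $\tilde{Y}$ dominating it, and invoke the projection formula for the degree pairing $\cdot_k$, which gives $(\pi_Z^*\mathcal{L}-\epsilon E_Z)\cdot_k C=\tfrac{1}{\deg(C'/C)}\,\tilde{g}^*(\pi_Z^*\mathcal{L}-\epsilon E_Z)\cdot_k C'\geq 0$, so nonnegativity on all curves descends.

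I expect the main obstacle to be this reverse nefness implication carried out over a field $k$ that is not algebraically closed: one must phrase descent of nefness along $\tilde{g}$ entirely in terms of the degree pairing $\cdot_k$ of Section 2 and verify the identity $\tilde{g}^*M\cdot_k C'=\deg(C'/C)\,(M\cdot_k C)$ in that language, rather than quoting the corresponding statement over $\overline{k}$. The relation $\tilde{g}^*E_Z=E_Y$ with the correct reducedness is a secondary point, but it rests squarely on $g$ being unramified; once these two facts are established the rest is the formal manipulation displayed above.
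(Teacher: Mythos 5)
Your proposal follows essentially the same route as the paper: the flat base-change of blow-ups via Lemma \ref{Liu:8.1.12}, the identification $h^*F = E_1+\cdots+E_r$, commutativity of the Picard diagrams, and the equivalence of nefness under pullback along the finite morphism $h$. The only difference is that you spell out the descent of nefness along a finite surjective morphism via the projection formula, a step the paper compresses into the single remark that $h$ is finite.
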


\begin{proof}
As $g$ is a proper morphism, the pullback of nef line bundle is nef so $g^*\mathcal{L}$ is a nef line bundle on $Y$. 
Also $g$ is flat and unramified, hence by Lemma \ref{Liu:8.1.12} we have the following fibered diagram:
$$\begin{tikzcd}[row sep=huge, column sep=huge]
\tilde{Y}\arrow[r, "h"] \arrow[d, "\pi_1"'] &\tilde{Z} \arrow[d, "\pi_2"] \\
Y\arrow[r, "g"'] & Z
\end{tikzcd}$$
where $\pi_1$ and $\pi_2$ denote blow-ups of $Y$ and $Z$ at $\{y_1, y_2,\cdots, y_r\}$ and $z$ respectively. 
	
We then get the following commutating diagram of Picard groups.
$$\begin{tikzcd}[row sep=huge, column sep=huge]
Pic(Z)\arrow[r, "g^*"] \arrow[d, "\pi_2^*"'] &Pic(Y) \arrow[d, "\pi_1^*"] \\
Pic(\tilde{Z})\arrow[r, "h^*"'] & Pic(\tilde{Y})
\end{tikzcd}$$
	
By Definition \ref{Das:multi:9.7}, 
$$\epsilon(Y,g^*\mathcal{L},y_1, y_2,\cdots, y_r)=\max\{\epsilon\geq0:\pi_1^*g^*\mathcal{L}-\epsilon (E_1+E_2\cdots +E_r)~\text{is nef}\}$$
where $E_1+E_2\cdots +E_r$ is the exceptional divisor of the blow-up $\pi_1$. 
So we have 
$$\epsilon(Y,g^*\mathcal{L},y_1, y_2,\cdots, y_r)=\max\{\epsilon\geq0:h^*\pi_2^*\mathcal{L}-\epsilon h^*F~\text{is nef}\}$$ 
where $F$ is the exceptional divisor of the blow-up $\pi_2$. 
Now we notice that $h$ is finite being base change of finite map.
Hence we have  $\epsilon(Y,g^*\mathcal{L},y_1, y_2,\cdots, y_r)=max\{\epsilon\geq0:\pi_2^*\mathcal{L}-\epsilon F\text{ is nef}\}$.
This proves the result.
\end{proof}

\begin{cor}
Let $Y$, $Z$, $g$, $\mathcal{L}$ and $r$ be as above. 
Then $\epsilon(Z,\mathcal{L},z)\leq \sqrt[m]{\frac{(g^*\mathcal{L}^m)_k}{d_1+d_2\cdots+d_r}}$ where $m>1$ is the dimension of $Y$ and $d_i=deg(y_i)$ for $i=1,2\cdots,r$.
\end{cor}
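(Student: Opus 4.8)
The plan is to push the computation up to the cover $Y$ via Theorem \ref{Main3} and then run the self-intersection argument underlying the Lazarsfeld bound (Proposition \ref{Laz:bound}), now for the multi-point blow-up carried out over $k$. First I would apply Theorem \ref{Main3} to replace $\epsilon(Z,\mathcal{L},z)$ by the multi-point Seshadri constant $\epsilon(Y,g^*\mathcal{L},y_1,\ldots,y_r)$, so that it suffices to bound the latter. Writing $\mathcal{M}=g^*\mathcal{L}$, letting $\pi_1:\tilde{Y}\to Y$ be the blow-up at $\{y_1,\ldots,y_r\}$ with exceptional divisor $E=E_1+\cdots+E_r$, I then fix any real $\epsilon\geq 0$ for which $\pi_1^*\mathcal{M}-\epsilon E$ is nef on $\tilde{Y}$, with the aim of forcing an upper bound on such $\epsilon$.

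The key input is that a nef line bundle (or nef $\mathbb{R}$-divisor) on an $m$-dimensional projective variety has non-negative top self-intersection over $k$; I would justify this by passing to $\overline{k}$, using that both nefness and intersection numbers are stable under base extension (Lemma \ref{Donu:C.5} and the base-change behaviour of $\cdot_k$). Applying this to $\pi_1^*\mathcal{M}-\epsilon E$ gives $\big((\pi_1^*\mathcal{M}-\epsilon E)^m\big)_k\geq 0$, and I would expand the left-hand side binomially. The mixed terms $\big((\pi_1^*\mathcal{M})^{m-j}\cdot E^j\big)_k$ with $0<j<m$ all vanish: the points $y_i$ being distinct, the exceptional divisors $E_i$ are pairwise disjoint, so $E^j=\sum_i E_i^j$, and each $E_i$ lies over the single point $y_i$, on which $\mathcal{M}$ is (numerically) trivial; hence the projection formula kills every term still carrying a positive power of $\pi_1^*\mathcal{M}$.

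Only the extreme terms survive, so the expansion collapses to $\big((\pi_1^*\mathcal{M})^m\big)_k+(-\epsilon)^m (E^m)_k$. Here $\big((\pi_1^*\mathcal{M})^m\big)_k=(g^*\mathcal{L}^m)_k$ by the projection formula for the birational morphism $\pi_1$, while $(E^m)_k=\sum_i (E_i^m)_k$, and the multiplicity identity ${\rm mult}_{y_i/k}Y=(-1)^{m+1}(E_i^m)_k=d_i$ gives $(E_i^m)_k=(-1)^{m+1}d_i$. Tracking signs, $(-\epsilon)^m(E^m)_k=-\epsilon^m(d_1+\cdots+d_r)$, so non-negativity of the self-intersection reads $(g^*\mathcal{L}^m)_k-\epsilon^m(d_1+\cdots+d_r)\geq 0$, that is $\epsilon\leq \sqrt[m]{(g^*\mathcal{L}^m)_k/(d_1+\cdots+d_r)}$. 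Taking the supremum over admissible $\epsilon$ and invoking Theorem \ref{Main3} once more yields the claimed inequality for $\epsilon(Z,\mathcal{L},z)$.

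I expect the main obstacle to be justifying the vanishing of the mixed intersection terms cleanly inside the scheme-theoretic intersection theory over $k$ developed in the paper, together with the sign bookkeeping in $(E_i^m)_k=(-1)^{m+1}d_i$. The non-negativity of the top self-intersection of a nef bundle over a general field also merits an explicit reduction to $\overline{k}$ rather than being assumed, and one should record that the intersection product extends $\mathbb{R}$-multilinearly so that $\epsilon^m$ is meaningful; none of these is deep, but each needs the base-change compatibility of $\cdot_k$ to be stated carefully.
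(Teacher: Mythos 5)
Your proposal is correct and follows essentially the same route as the paper: both pass to the multi-point Seshadri constant on $Y$ via Theorem \ref{Main3}, use non-negativity of the top self-intersection of the nef class $\pi_1^*g^*\mathcal{L}-\epsilon E$ on the blow-up, and expand it to $(g^*\mathcal{L}^m)_k-(d_1+\cdots+d_r)\epsilon^m$. The only difference is that you spell out the vanishing of the mixed terms and the identity $(E_i^m)_k=(-1)^{m+1}d_i$, which the paper asserts without comment.
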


\begin{proof}
Let $\epsilon=\epsilon(Y,g^*\mathcal{L},y_1, y_2,\cdots, y_r)$ then we have $(\pi^*g^*\mathcal{L}-\epsilon (E_1+E_2\cdots +E_r))^m_k\geq 0$ where $E_1, E_2\cdots E_r$ are as above. Now $(\pi^*g^*\mathcal{L}-\epsilon (E_1+E_2\cdots +E_r))^m_k=(g^*\mathcal{L}^m)_k-(d_1+d_2\cdots+d_r)\epsilon^m.$
Hence $\epsilon(Y,g^*\mathcal{L},y_1, y_2,\cdots, y_r)\leq \sqrt[m]{\frac{(g^*\mathcal{L}^m)_k}{d_1+d_2\cdots+d_r}}$. 
Hence the result follows from Theorem \ref{Main3}.
\end{proof}

\begin{cor}\label{Cor4.4}
Let $Y, Z, g, \mathcal{L}$ and $y_1$ be as above. 
Then $\epsilon(Y,g^*\mathcal{L},y_1)\geq\epsilon(Z,\mathcal{L},z)$.
\end{cor}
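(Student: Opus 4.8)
The plan is to reduce the statement to a comparison between the one-point and the multi-point Seshadri constants on $Y$, and then to quote Theorem \ref{Main3}. Indeed, Theorem \ref{Main3} already gives
$$\epsilon(Z,\mathcal{L},z)=\epsilon(Y,g^*\mathcal{L},y_1,y_2,\cdots,y_r),$$
so it is enough to establish the general inequality
$$\epsilon(Y,g^*\mathcal{L},y_1)\ \geq\ \epsilon(Y,g^*\mathcal{L},y_1,y_2,\cdots,y_r),$$
which expresses the intuitively clear fact that imposing extra vanishing conditions at the additional points $y_2,\ldots,y_r$ can only decrease the Seshadri constant. Note that $g^*\mathcal{L}$ is nef by the argument already used in the proof of Theorem \ref{Main3}, so both constants are defined.

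To prove this inequality I would work with the infimum characterization. The argument of Proposition \ref{Das:9.8} carries over verbatim to several points: testing nefness of $\pi_1^*g^*\mathcal{L}-\epsilon(E_1+\cdots+E_r)$ on strict transforms of curves and using Remark \ref{Das:9.5} to identify $E_i\cdot_k\tilde C$ with ${\rm mult}_{y_i/k}C$ yields
$$\epsilon(Y,g^*\mathcal{L},y_1,y_2,\cdots,y_r)=\inf_{C}\frac{g^*\mathcal{L}\cdot_kC}{\sum_{i=1}^r {\rm mult}_{y_i/k}C},$$
the infimum running over integral curves $C$ meeting at least one $y_i$. Granting this, for any integral curve $C$ through $y_1$ the nonnegativity of each ${\rm mult}_{y_i/k}C$ gives ${\rm mult}_{y_1/k}C\le\sum_{i=1}^r{\rm mult}_{y_i/k}C$, whence
$$\frac{g^*\mathcal{L}\cdot_kC}{{\rm mult}_{y_1/k}C}\ \geq\ \frac{g^*\mathcal{L}\cdot_kC}{\sum_{i=1}^r{\rm mult}_{y_i/k}C}\ \geq\ \epsilon(Y,g^*\mathcal{L},y_1,y_2,\cdots,y_r).$$
Taking the infimum over all such $C$ and applying Proposition \ref{Das:9.8} at the single point $y_1$ gives the required inequality, which combined with the equality from Theorem \ref{Main3} finishes the proof.

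The step I expect to be the main obstacle is the justification of the multi-point infimum formula over the not-necessarily-algebraically-closed field $k$, since the excerpt records Proposition \ref{Das:9.8} only for a single closed point; the essential point is that curves contained in the exceptional divisors contribute only the harmless constraint $\epsilon\ge 0$, exactly as in the one-point proof. If one prefers to bypass this formula, there is a purely geometric route: factor the blow-up $\pi_1$ at $\{y_1,\ldots,y_r\}$ as $\sigma\circ\tau$, where $\sigma:Y'\to Y$ is the blow-up at $y_1$ with exceptional divisor $E_1'$ and $\tau:\tilde Y\to Y'$ blows up the remaining points, so that $\tau^*E_1'=E_1$ and $\tau^*\sigma^*=\pi_1^*$. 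For $\epsilon<\epsilon(Y,g^*\mathcal{L},y_1,\ldots,y_r)$ one checks that $\pi_1^*g^*\mathcal{L}-\epsilon E_1$ is nef on $\tilde Y$ by intersecting with curves: against strict transforms the extra effective terms $\epsilon(E_2+\cdots+E_r)$ have nonnegative intersection, while a curve contained in some $E_j$ is contracted by $\pi_1$ and is handled directly. Since $\tau$ is proper and birational, nefness descends, so $\sigma^*g^*\mathcal{L}-\epsilon E_1'$ is nef on $Y'$, giving $\epsilon(Y,g^*\mathcal{L},y_1)\ge\epsilon$; letting $\epsilon$ tend to $\epsilon(Y,g^*\mathcal{L},y_1,\ldots,y_r)$ and invoking Theorem \ref{Main3} yields the corollary.
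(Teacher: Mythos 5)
Your proposal is correct, and it is worth separating your two routes. Your fallback route is essentially the paper's own proof: the paper writes $E_1=h^*F-(E_2+\cdots+E_r)$ on the blow-up $\tilde Y$ at all $r$ points and concludes that nefness of $h^*\pi_2^*\mathcal{L}-\epsilon h^*F$ forces nefness of $\pi_1^*g^*\mathcal{L}-\epsilon E_1$, then pushes down along the finite map $h$. Your version is in fact more careful on two points the paper glosses over: you explain why adding the effective divisor $\epsilon(E_2+\cdots+E_r)$ cannot destroy nefness (the only worry is a curve inside some $E_j$ with $j\ge 2$, which is contracted by $\pi_1$ and disjoint from $E_1$, so it is handled directly), and you use the factorization $\pi_1=\sigma\circ\tau$ together with descent of nefness along the proper surjective $\tau$ to justify computing the \emph{one-point} constant $\epsilon(Y,g^*\mathcal{L},y_1)$ on the $r$-point blow-up, an identification the paper makes silently. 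Your primary route --- proving $\epsilon(Y,g^*\mathcal{L},y_1)\ge\epsilon(Y,g^*\mathcal{L},y_1,\ldots,y_r)$ via the infimum characterization and the trivial bound ${\rm mult}_{y_1/k}C\le\sum_i{\rm mult}_{y_i/k}C$ --- is genuinely different in presentation and arguably more transparent, but it does rest on a multi-point analogue of Proposition \ref{Das:9.8} that the paper never states; you correctly identify this as the only gap and your sketch of how to fill it (curves inside exceptional divisors contribute only $\epsilon\ge 0$, exactly as in the one-point case) is the right argument. Either route, combined with the equality from Theorem \ref{Main3}, completes the proof.
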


\begin{proof}
We have $h^*F=E_1+E_2 +\cdots +E_r$ where $h$, $F$ and $E_1,E_2,\cdots ,E_r$ are as in Theorem \ref{Main3}. 
We have that $\epsilon(Y,g^*\mathcal{L},y_1) = \max\{\epsilon\geq0:\pi_1^*g^*\mathcal{L}-\epsilon E_1$ is nef$\}$ which is the same as $\max\{\epsilon\geq0:\pi_1^*g^*\mathcal{L}-\epsilon (h^*F-(E_2+\cdots+E_r))$ is nef$\}$. 
Using commutativity of diagram of Picard groups in the proof of Theorem \ref{Main3} we get that $\epsilon(Y,g^*\mathcal{L},y_1)=\max\{\epsilon\geq0:h^*\pi_2^*\mathcal{L}-\epsilon (h^*F-(E_2+\cdots+E_r))$ is nef$\}$. 
Therefore, $\epsilon(Y,g^*\mathcal{L},y_1)\geq \max\{\epsilon\geq0:h^*\pi_2^*\mathcal{L}-\epsilon h^*F$ is nef$\}$. 
Now as $h$ is finite morphism, $\epsilon(Y,g^*\mathcal{L},y_1)\geq \max\{\epsilon\geq0:\pi_2^*\mathcal{L}-\epsilon F$ is nef$\}$ and hence the result.
\end{proof}

If the point $x \in X$ is a closed point which is not a $k$-rational point then it is not necessary that the Seshadri constants remain the same over $k$ and over $k(x)$.
The following proposition gives a lower bound for the Seshadri constant over $k(x)$.

\begin{prop}
Let $X$ be a smooth projective variety defined over a characteristic zero field $k$ and $x_0$ be a closed point of $X$. 
Let $Q$ denote the finite extension $k(x_0)$ of $k$ and let $K$ be a field extension of $k$ containing $Q$. 
Let $f:X_K \to X$ denote the natural map and $y_0\in f^{-1}\{x_0\}$.
Then
$$\epsilon(X_K,f^*\mathcal{L},y_0)\geq\epsilon(X,\mathcal{L},x_0) .$$
\end{prop}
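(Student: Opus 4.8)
The plan is to treat this as the non-rational-point analogue of Theorem \ref{Main1}, combining the base-change computation used there with the multi-point-to-single-point reduction of Corollary \ref{Cor4.4}. First I would record the shape of the fibre. Since $\mathrm{char}\,k=0$, the finite extension $Q=k(x_0)$ is separable over $k$, so $Q\otimes_k K$ is a reduced finite $K$-algebra, namely a product of finite separable extensions of $K$ (nonempty since $K\supseteq Q$). Hence $f^{-1}\{x_0\}=\mathrm{Spec}(Q\otimes_k K)$ is a reduced set of finitely many closed points $\{y_0,y_1,\dots,y_s\}$, our given $y_0$ being one of them, and $f^*\mathcal{L}$ is nef by Lemma \ref{Donu:C.5}, so the left-hand side is defined. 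Taking $\mathcal{I}$ to be the ideal sheaf of $\{x_0\}\hookrightarrow X$ and applying Lemma \ref{Liu:8.1.12}, the base change to $K$ of the blow-up $\pi:\tilde X\to X$ at $x_0$ is the blow-up $\pi_K:\tilde X_K\to X_K$ at this reduced fibre, giving a fibered square with horizontal maps $f$ and $f_K$ exactly as in Theorem \ref{Main1}. Because the fibre is reduced (this is where characteristic zero is used), the exceptional divisor pulls back with multiplicity one, $f_K^*E=E_0+E_1+\dots+E_s$, where $E_i$ is the exceptional divisor over $y_i$; concretely $E\cong\mathbb{P}^{\dim X-1}_Q$ and $E\times_k K$ splits as the disjoint union of the $E_i=E\times_Q k(y_i)$.

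Next I would prove the \emph{equality} of the multi-point Seshadri constant at the whole fibre with $\epsilon(X,\mathcal{L},x_0)$. The diagram of Picard groups commutes, so $\pi_K^*f^*=f_K^*\pi^*$, and for $\epsilon\ge0$ one has
$$\pi_K^*f^*\mathcal{L}-\epsilon(E_0+\dots+E_s)=f_K^*(\pi^*\mathcal{L}-\epsilon E).$$
Since $f_K$ is the base change of $\tilde X$ from $k$ to $K$, Lemma \ref{Donu:C.5} shows this divisor is nef on $\tilde X_K$ if and only if $\pi^*\mathcal{L}-\epsilon E$ is nef on $\tilde X$. Taking suprema and using Definition \ref{Das:multi:9.7} gives the exact equality $\epsilon(X_K,f^*\mathcal{L},y_0,\dots,y_s)=\epsilon(X,\mathcal{L},x_0)$. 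This is the point at which, unlike in Corollary \ref{Cor4.4}, the map $f$ need not be finite: Lemma \ref{Donu:C.5} is precisely what replaces the finiteness argument there.

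Finally I would pass to the single point $y_0$, following Corollary \ref{Cor4.4}. Writing $E_0=f_K^*E-(E_1+\dots+E_s)$, the constant $\epsilon(X_K,f^*\mathcal{L},y_0)$ is the supremum of those $\epsilon\ge0$ for which $f_K^*(\pi^*\mathcal{L}-\epsilon E)+\epsilon(E_1+\dots+E_s)$ is nef, and the claim is that nefness of $f_K^*(\pi^*\mathcal{L}-\epsilon E)$ already forces nefness of this larger divisor. I would verify this curve by curve on $\tilde X_K$, using that $E_1,\dots,E_s$ are effective and pairwise disjoint from $E_0$: for a curve $\tilde C$ not contained in any $E_j$ with $j\ge1$ each $E_j\cdot_K\tilde C\ge0$, so the intersection is nonnegative; for $\tilde C\subseteq E_j$ one has $f_K^*\pi^*\mathcal{L}\cdot_K\tilde C=0$ (as $\pi^*\mathcal{L}$ restricts trivially to the $\pi$-exceptional locus) and $f_K^*E\cdot_K\tilde C=E_j\cdot_K\tilde C$ (the $E_i$ being disjoint), so $f_K^*(\pi^*\mathcal{L}-\epsilon E)\cdot_K\tilde C=-\epsilon\,E_j\cdot_K\tilde C$ and the added term cancels it to give $0$. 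Hence the supremum can only grow when the effective summand is included, yielding
$$\epsilon(X_K,f^*\mathcal{L},y_0)\ \ge\ \sup\{\epsilon\ge0:\ f_K^*(\pi^*\mathcal{L}-\epsilon E)\ \text{is nef}\}\ =\ \epsilon(X,\mathcal{L},x_0),$$
which is the assertion. The main obstacle is exactly this last reduction: it is where equality degrades to inequality, and it relies on the disjointness of the exceptional components over the fibre together with the vanishing of $f_K^*\pi^*\mathcal{L}$ on curves inside the extra components. By contrast, once Lemmas \ref{Liu:8.1.12} and \ref{Donu:C.5} are available, the multi-point equality of the previous step is essentially formal.
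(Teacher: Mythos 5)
Your proof is correct, but it takes a genuinely different route from the paper's. The paper factors $f$ through the intermediate variety $X_Q$ as $X_K \xrightarrow{e} X_Q \xrightarrow{a} X$: since $Q/k$ is finite separable, $a$ is a finite \'etale cover, so Corollary \ref{Cor4.4} gives $\epsilon(X_Q,a^*\mathcal{L},x_0)\geq\epsilon(X,\mathcal{L},x_0)$ at the $Q$-rational point of $X_Q$ above $x_0$, and Theorem \ref{Main1} applied to $e$ then gives $\epsilon(X_K,f^*\mathcal{L},y_0)=\epsilon(X_Q,a^*\mathcal{L},x_0)$, so the paper gets the inequality by composing two already-proved results. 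You instead base-change to $K$ in one step, establish the intermediate \emph{multi-point} equality $\epsilon(X_K,f^*\mathcal{L},y_0,\dots,y_s)=\epsilon(X,\mathcal{L},x_0)$ over the whole (reduced, by separability) fibre via Lemmas \ref{Liu:8.1.12} and \ref{Donu:C.5}, and then drop the extra exceptional components $E_1,\dots,E_s$. The two arguments use the same underlying ingredients --- flat base change of the blow-up, invariance of nefness under field extension, and the observation that discarding disjoint effective exceptional summands can only increase the nef threshold --- and both invoke separability of $k(x_0)/k$ at the analogous point (\'etaleness of $a$ for the paper, reducedness of $f^{-1}\{x_0\}$ for you). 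What your version buys is that it makes explicit the curve-by-curve verification (splitting into curves inside and outside the $E_j$) that the paper's Corollary \ref{Cor4.4} leaves implicit, and it records the multi-point base-change equality as a statement of independent interest; what the paper's version buys is brevity, since it reuses its two main theorems verbatim rather than reproving their content on $X_K$.
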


\begin{proof}
We consider the following commutative diagram
$$
\begin{tikzcd}[row sep=huge, column sep=huge]
X_{K}\arrow[r,"e"] \arrow[d]\arrow[rr, "f", bend left=40] &X_{Q}\arrow[r, "a"] \arrow[d] &X \arrow[d] \\
spec(K) \arrow[r] &spec(Q)\arrow[r, "c"'] & spec(k)
\end{tikzcd}
$$
From the diagram, we see that $a:X_{Q}\to X$ is a finite map.
Further, $x_0$ is $Q$-rational point of $X_Q$ and $y_0$ is the unique point in $X_{K}$ such that $e(y_0)=x_0$. 
Corollary \ref{Cor4.4} then gives $\epsilon(X_Q,a^*\mathcal{L},x_0)\geq\epsilon(X,\mathcal{L},x_0)$. 
Further, by Theorem \ref{Main1}, we have 
$$\epsilon(X_K, f^*(\mathcal{L}), y_0) = \epsilon(X_K, e^*a^*(\mathcal{L}), y_0)=\epsilon(X_Q,a^*\mathcal{L},x_0) .$$
This proves the required inequality.
\end{proof}

In the following example given in \cite{roth}  we see that Seshadri constants of finite etale cover can be very large.

\begin{theorem}\label{thm4.6}
Let $X$ be an abelian variety and $\mathcal{L}$ be an ample line bundle on $X$. 
Let $[n]:X\to X$ is the multiplication by $n$ map where $n$ is a positive integer. 
Then	
$$[n]^*\mathcal{L}\simeq{\mathcal{L}^{\otimes\frac{n^2+n}{2}}\otimes([-1]^*\mathcal{L})^{\otimes\frac{n^2-n}{2}}} .$$
\end{theorem}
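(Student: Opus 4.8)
The plan is to deduce the formula from the Theorem of the Cube, which is the fundamental tool for studying line bundles on abelian varieties and which holds over an arbitrary base field, in particular over our characteristic-zero field $k$. The input I would record first is its standard corollary: for an abelian variety $X$, a line bundle $\mathcal{L}$ on $X$, and any three morphisms $f,g,h\colon T\to X$ from a $k$-scheme $T$, the line bundle
$$(f+g+h)^*\mathcal{L}\otimes (f+g)^*\mathcal{L}^{-1}\otimes (f+h)^*\mathcal{L}^{-1}\otimes (g+h)^*\mathcal{L}^{-1}\otimes f^*\mathcal{L}\otimes g^*\mathcal{L}\otimes h^*\mathcal{L}$$
is trivial on $T$ (see Mumford, \emph{Abelian Varieties}). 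Since the origin of $X$ is $k$-rational and the maps $[m]$ are all defined over $k$, this identity lives in $\mathrm{Pic}(X)$.

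First I would specialise by taking $T=X$ and $f=[n]$, $g=[1]=\mathrm{id}_X$, $h=[-1]$. Because addition of these maps corresponds to addition of the integers, one has $f+g+h=[n]$, $f+g=[n+1]$, $f+h=[n-1]$, and $g+h=[0]$, the constant map to the origin. Using that $[0]^*\mathcal{L}$ is trivial and $[1]^*\mathcal{L}=\mathcal{L}$, substituting into the cube identity and solving for the $[n+1]$ term yields the recursion
$$[n+1]^*\mathcal{L}\;\cong\;\left([n]^*\mathcal{L}\right)^{\otimes 2}\otimes\left([n-1]^*\mathcal{L}\right)^{-1}\otimes\mathcal{L}\otimes[-1]^*\mathcal{L}$$
in $\mathrm{Pic}(X)$.

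With the recursion in hand, the formula follows by induction on $n$. The base cases $n=0$ and $n=1$ are immediate, since the pairs of exponents $\bigl(\tfrac{n^2+n}{2},\tfrac{n^2-n}{2}\bigr)$ equal $(0,0)$ and $(1,0)$, matching $[0]^*\mathcal{L}\cong\mathcal{O}_X$ and $[1]^*\mathcal{L}=\mathcal{L}$. Writing $\mathcal{M}=\mathcal{L}$ and $\mathcal{N}=[-1]^*\mathcal{L}$, I would substitute the inductive hypotheses for $[n]^*\mathcal{L}$ and $[n-1]^*\mathcal{L}$ into the recursion and verify that the exponents of $\mathcal{M}$ and of $\mathcal{N}$ become $\tfrac{(n+1)^2+(n+1)}{2}$ and $\tfrac{(n+1)^2-(n+1)}{2}$ respectively; this is a short computation using $(n-1)n=n^2-n$ and $(n-1)(n-2)=n^2-3n+2$.

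The only genuine point to secure is the validity of the Theorem of the Cube over a field that is not assumed algebraically closed, but this is not a real obstacle: the cube theorem and the displayed corollary are valid for abelian varieties over any base, and the relevant maps are all $k$-morphisms because the origin is $k$-rational. Once this input is granted, the remainder of the argument is the purely formal induction sketched above, and I expect no further difficulty.
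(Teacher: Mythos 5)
Your proof is correct and complete. Note that the paper itself supplies no argument for this statement: Theorem \ref{thm4.6} is quoted as a known result (it is the classical formula from Mumford's \emph{Abelian Varieties}, invoked in the text via the reference to McKinnon--Roth), and is immediately followed by the example with no proof environment, so there is nothing in the paper to compare your argument against. What you have written is precisely the standard textbook derivation: the corollary of the Theorem of the Cube applied with $f=[n]$, $g=[1]$, $h=[-1]$ gives $f+g+h=[n]$, $f+g=[n+1]$, $f+h=[n-1]$, $g+h=[0]$, and since $[0]^*\mathcal{L}$ is trivial this yields the recursion $[n+1]^*\mathcal{L}\cong([n]^*\mathcal{L})^{\otimes 2}\otimes([n-1]^*\mathcal{L})^{-1}\otimes\mathcal{L}\otimes[-1]^*\mathcal{L}$; the two-step induction with base cases $n=0$ and $n=1$ then closes, and the exponent bookkeeping checks out, e.g.\ $2\cdot\tfrac{n^2+n}{2}-\tfrac{(n-1)^2+(n-1)}{2}+1=\tfrac{(n+1)^2+(n+1)}{2}$ and likewise for the $[-1]^*\mathcal{L}$ factor. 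Your closing remark is also the right point to secure: the Theorem of the Cube and its corollary are valid for abelian varieties over an arbitrary base field, and all the maps $[m]$ are $k$-morphisms, so the identity genuinely lives in $\mathrm{Pic}(X)$ over $k$ as the surrounding section requires.
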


\begin{exam}{\rm 
Let $X$ be an abelian variety over a characteristic zero field $k$ and $L$ be an ample line bundle on $X$. 
Assume that the group of $k$-rational points on $X$ is torsion-free. 
For $n \in \mathbb{N}$, let $[n]:X\to X$ be the map $x\to n.x$. 
Now, let us denote by $\mathcal{L}$ the line bundle $L \otimes[-1]^* L$. 

We note that $[n]$ is a finite \'{e}tale cover 
and we note that $[-1]^*\mathcal{L}=\mathcal{L}$.
Then using \ref{thm4.6}  we see that  $[n]^*\mathcal{L}$ is numerically equivalent to $n^2\mathcal{L}$ so for any rational point $y\in X$ we have $\epsilon(X,[n]^*\mathcal{L},y)=n^2\epsilon(X,\mathcal{L},y)$.

This shows that the Seshadri constants of finite \'{e}tale covers can be unbounded.
}\end{exam}

\section{An application, existence of a Seshadri curve}

Let $X$ be a projective variety defined over a field $k$ and let $\epsilon(X, \mathcal{L}, x)$ be the Seshadri constant of a line bundle $\mathcal{L}$ at a point $x$ in $X$.
As we know, this is the infimum of certain fractions over curves passing through $x$. 
If $X$ contains a curve $C$ over $k$ passing through $x$ such that $\epsilon(X, \mathcal{L}, x) = \frac{\mathcal{L}\cdot_k C}{{\rm mult}_{x/k}C}$ then the curve $C$ is called a {\it Seshadri curve} for $\mathcal{L}$ and $x$. 

It is an important problem to establish the existence of a Seshadri curve and is related to computing canonical slope of a line bundle (\cite[Section 5]{Bau}).
We prove the existence of a Seshadri curve under some natural conditions which are analogous to \cite[Lemma 5.2]{Bau}, which is stated for an algebraically closed field. 
We recover the statement when $\alpha$, as defined below, is equal to $1$.

We first give an upper bound of Seshadri constants over arbitrary fields.

\begin{prop}\label{Seshadri-curves}
Let $X$ be a smooth projective surface over a field $k$ and $x\in X$ be a closed point. 
Let $\alpha=[k(x):k]$.
Then for any nef line bundle $\mathcal{L}$ on $X$ over $k$ 
$$\epsilon(X, \mathcal{L}, x)\leq\frac{1}{\sqrt{\alpha}}\sqrt{{\mathcal{L}_k}^2} .$$
\end{prop}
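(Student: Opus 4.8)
The plan is to exploit the self-intersection formula on the blow-up, combined with the elementary fact that a nef divisor on a surface has nonnegative self-intersection. First I would set up $\pi:\tilde X\to X$, the blow-up of $X$ at $x$ with exceptional divisor $E$, so that by Definition \ref{Das:9.7} every $\epsilon\geq 0$ for which $\pi^*\mathcal{L}-\epsilon E$ is nef satisfies $\epsilon\leq\epsilon(X,\mathcal{L},x)$, while the Seshadri constant is precisely the supremum of all such admissible $\epsilon$. The whole argument then reduces to bounding each admissible $\epsilon$ individually.

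Next I would compute the three intersection numbers on $\tilde X$. By the projection formula one gets $(\pi^*\mathcal{L})^2_k=\mathcal{L}^2_k$ and $\pi^*\mathcal{L}\cdot_k E=0$, the latter because $E$ is $\pi$-exceptional. For the self-intersection of $E$ I would invoke the $k$-multiplicity formula ${\rm mult}_{x/k}X=(-1)^{n+1}(E^n)_k$ with $n=2$, together with the smoothness of $X$, which gives ${\rm mult}_{x/k}X=[k(x):k]=\alpha$; hence $(E^2)_k=-\alpha$. Expanding $\mathbb{R}$-bilinearly then yields
$$(\pi^*\mathcal{L}-\epsilon E)^2_k=\mathcal{L}^2_k-\alpha\epsilon^2 .$$

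The key analytic input is that whenever $\pi^*\mathcal{L}-\epsilon E$ is nef its self-intersection is nonnegative, which forces $\mathcal{L}^2_k-\alpha\epsilon^2\geq 0$, i.e. $\epsilon\leq\frac{1}{\sqrt{\alpha}}\sqrt{\mathcal{L}^2_k}$; taking the supremum over admissible $\epsilon$ then closes the argument. The main obstacle is therefore to justify, over a not-necessarily-algebraically-closed field $k$, that a nef $\mathbb{R}$-divisor $D$ on a smooth projective surface satisfies $D^2_k\geq 0$. I would establish this by base change to $\overline{k}$: intersection numbers are preserved under field extension, so $D^2_k=(D_{\overline{k}})^2$, while $D_{\overline{k}}$ remains nef by Lemma \ref{Donu:C.5}, and over the algebraically closed field $\overline{k}$ the inequality $(D_{\overline{k}})^2\geq 0$ is classical (for instance it follows by applying the Nakai--Moishezon positivity of $D_{\overline{k}}+tH$ for an ample $H$ and letting $t\to 0^+$). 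Applying this both to $\pi^*\mathcal{L}-\epsilon E$ and to $\mathcal{L}$ itself also guarantees that $\mathcal{L}^2_k\geq 0$, so that the square root on the right-hand side is a genuine real number.
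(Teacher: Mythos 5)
Your proposal is correct and follows essentially the same route as the paper: expand $(\pi^*\mathcal{L}-\epsilon E)^2_k=\mathcal{L}^2_k-\alpha\epsilon^2$ using $(E^2)_k=-\alpha$ and conclude from the nonnegativity of the self-intersection of a nef class. Your version is slightly more careful in two places the paper glosses over --- bounding each admissible $\epsilon$ before taking the supremum rather than asserting nefness at the supremum itself, and justifying $D^2_k\geq 0$ over a general base field --- but the argument is the same.
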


\begin{proof}
We fix a nef line bundle $\mathcal{L}$ on $X$ over $k$.
Let $\pi: \tilde{X}\to X$ be the blow-up of $X$ at $x$ and let us denote by $\epsilon$ the Seshadri constant $\epsilon(X, \mathcal{L}, x)$. 
Then from Definition \ref{Das:9.7} we have $(\pi^*(\mathcal{L})-\epsilon E)_k^2\geq0$. 
Now since $E_k^2=-\alpha$ we have ${\mathcal{L}_k}^2-\alpha{\epsilon}^2\geq0$ we have the result.
\end{proof}

The above proposition enables us to prove a result about the global Seshadri constant of a projective variety $X$. 

\begin{cor}\label{cor-Seshadri-curves}
Let $X$ be a smooth projective surface defined over a field $k$.
Assume further that for every positive integer $n$ there is a closed point $x_n \in X$ such that $[k(x_n): k] > n$. 
In other words, we assume that the degrees of extensions $k(x)$ of closed points $x \in X$ are unbounded. 

Then the global Seshadri constant of $X$ is zero.
\end{cor}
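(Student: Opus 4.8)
The plan is to reduce the claim to the upper bound just established in Proposition \ref{Seshadri-curves} together with the unboundedness hypothesis, so that essentially no new geometry is required. The key observation is that the global Seshadri constant is, by definition, an infimum over \emph{all} ample line bundles and \emph{all} closed points; hence to force it to vanish it suffices to exhibit a single ample bundle along which the local Seshadri constants become arbitrarily small as the point varies through the given family $x_n$.

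First I would fix one ample line bundle $\mathcal{L}$ on $X$ defined over $k$; such a bundle exists because $X$ is projective over $k$ (pull back $\mathcal{O}(1)$ under a closed immersion into some $\mathbb{P}^N_k$). Write $d={\mathcal{L}_k}^2$, a fixed positive number. By the definition of the global Seshadri constant we have $\epsilon(X)\leq\epsilon(X,\mathcal{L})$, so it is enough to prove $\epsilon(X,\mathcal{L})=0$.

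Next I would apply Proposition \ref{Seshadri-curves} to each of the points $x_n$ supplied by the hypothesis. Since $\mathcal{L}$ is ample, hence nef, the proposition gives $\epsilon(X,\mathcal{L},x_n)\leq\frac{1}{\sqrt{\alpha_n}}\sqrt{d}$, where $\alpha_n=[k(x_n):k]>n$. Thus $\epsilon(X,\mathcal{L},x_n)<\sqrt{d/n}$, which tends to $0$ as $n\to\infty$.

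Finally I would conclude by bookkeeping the infima: $\epsilon(X,\mathcal{L})=\inf_{x\in X}\epsilon(X,\mathcal{L},x)$ is bounded above by $\epsilon(X,\mathcal{L},x_n)$ for every $n$, so $\epsilon(X,\mathcal{L})=0$; since $0\leq\epsilon(X)\leq\epsilon(X,\mathcal{L})$, this forces $\epsilon(X)=0$. I do not expect a serious obstacle here, as all the analytic content already sits in Proposition \ref{Seshadri-curves}. The only points requiring a little care are the existence of an ample bundle defined over $k$ and the correct reading of the nested infima defining $\epsilon(X)$: it is the unboundedness of the degrees $\alpha_n$, rather than any quantity attached to a single point, that drives the upper bound to zero.
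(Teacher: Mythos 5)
Your argument is correct and is exactly the one the paper intends: the corollary is stated as an immediate consequence of Proposition \ref{Seshadri-curves} (the paper gives no separate written proof), and your application of that bound at the points $x_n$ with $\alpha_n \to \infty$, followed by the bookkeeping of the nested infima, is the whole content. No gaps.
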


This corollary gives us examples of varieties whose global Seshadri constants are zero. 
The hypothesis in the above corollary is satisfied by, the projective space $\mathbb{P}^2$, for instance. 
The hypothesis will be satisfied by any surface whose $\overline{k}$-points are not achieved over any finite extension of $k$. 

Note that, if $k$ is an algebraically closed field then it is expected that the global Seshadri constant of a projective $k$-variety $X$ is zero. 
Our corollary above is consistent with this. 

Now we give our main result regarding existence of Seshadri curves over a  characteristics zero field.

\begin{theorem}\label{Main2}
Let $X$ be a smooth projective normal surface over a characteristics zero field $k$.
Let $x \in X$ be a closed point with $\alpha=[k(x):k]$ and $\mathcal{L}$ be an ample line bundle on $X$ such that $\epsilon(X,\mathcal{L},x)<\frac{1}{\sqrt{\alpha}}\sqrt{{\mathcal{L}}^2_k}$. 
Then there exists a curve $C$ in $X$ such that $\epsilon(X,\mathcal{L},x)=\frac{\mathcal{L}\cdot_{k}C}{{\rm mult}_{x/k}C}$.
\end{theorem}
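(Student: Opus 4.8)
The plan is to mimic the classical argument of Bauer (\cite[Lemma 5.2]{Bau}) for the existence of a Seshadri curve, adapting every step to the intersection theory over the non-algebraically-closed field $k$ developed in Section 2. The strict inequality $\epsilon(X,\mathcal{L},x)<\frac{1}{\sqrt{\alpha}}\sqrt{\mathcal{L}_k^2}$ is precisely the gap that separates the Seshadri constant from the Lazarsfeld-type upper bound of Proposition \ref{Seshadri-curves}, and it is exactly this gap that forces the infimum in Proposition \ref{Das:9.8} to be attained by an actual curve rather than merely approached. Write $\epsilon=\epsilon(X,\mathcal{L},x)$. By Proposition \ref{Das:9.8} there is a sequence of integral curves $C_m$ through $x$ with $\frac{\mathcal{L}\cdot_k C_m}{{\rm mult}_{x/k}C_m}\to\epsilon$. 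I would set up on the blow-up $\pi:\tilde{X}\to X$ the divisor class $\pi^*\mathcal{L}-\epsilon E$, which is nef by definition of $\epsilon$ but not ample, and study the strict transforms $\tilde{C}_m$.

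First I would establish a bound on the degrees (with respect to a fixed ample class) of the curves $C_m$. The key numerical input is that, for the strict transform $\tilde C$ of any curve $C$, one has $(\pi^*\mathcal{L}-\epsilon E)\cdot_k\tilde C\geq 0$ together with the self-intersection computation $(\pi^*\mathcal{L}-\epsilon E)^2_k=\mathcal{L}_k^2-\alpha\epsilon^2>0$, the latter being strictly positive exactly because of the hypothesis. Thus $D:=\pi^*\mathcal{L}-\epsilon E$ is a nef class with $D^2_k>0$, so $D$ lies in the interior of the nef cone and $D$ is big. I would then use the Riemann--Roch theorem over $k$ (Theorem \ref{Tan:2.10}) on $\tilde X$ to control $h^0$ of multiples of $D$ and, via the Hodge Index Theorem over $k$, to show that any curve class $\tilde C$ with $D\cdot_k\tilde C$ close to zero must have bounded intersection against a fixed ample class. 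Concretely, if $D\cdot_k\tilde C_m\to 0$ and $D^2_k>0$, the Hodge index inequality $(D\cdot_k\tilde C_m)^2\geq D^2_k\,(\tilde C_m^2)$ forces the self-intersections $\tilde C_m^2$ to stay bounded above, and combined with nefness of an ample class this pins the $C_m$ into finitely many numerical classes.

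Next, from boundedness of the numerical classes I would conclude that only finitely many distinct values of $\frac{\mathcal{L}\cdot_k C_m}{{\rm mult}_{x/k}C_m}$ occur among curves whose class is bounded, because both $\mathcal{L}\cdot_k C$ and ${\rm mult}_{x/k}C=E\cdot_k\tilde C$ (Remark \ref{Das:9.5}) are determined by the numerical class of $\tilde C$ on $\tilde X$. A sequence of rationals drawn from a finite set and converging to $\epsilon$ must be eventually constant and equal to $\epsilon$; the curve realizing that value is the desired Seshadri curve $C$, giving $\epsilon(X,\mathcal{L},x)=\frac{\mathcal{L}\cdot_k C}{{\rm mult}_{x/k}C}$.

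The main obstacle I anticipate is the boundedness step: I must ensure that the degree-boundedness argument, which over $\overline{k}$ rests on Hodge index plus Riemann--Roch for surfaces, goes through verbatim with the $k$-linear intersection numbers $\cdot_k$ and the factor $\alpha=[k(x):k]$ appearing in $E^2_k=-\alpha$. In particular I need the Hodge Index Theorem to hold for the $k$-valued intersection form on $\mathrm{NS}(\tilde X)$, which should follow from Theorem \ref{Tan:2.10} by the standard argument since the form $\cdot_k$ is still a symmetric bilinear form of signature $(1,\rho-1)$ on the Néron--Severi space; verifying this signature statement over a general characteristic-zero field is the delicate point, and I would either cite it from \cite{Tan} or deduce it by base-changing to $\overline{k}$ and comparing the $k$-form with the $\overline k$-form via Lemma \ref{Donu:C.5} and the multiplicativity of intersection numbers under base extension.
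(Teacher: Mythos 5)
Your overall strategy (show that only finitely many curves can compute a submaximal ratio, so the infimum is a minimum) is a legitimate alternative to what the paper does, but the crucial boundedness step has a genuine gap as written. First, the premise $D\cdot_k\tilde C_m\to 0$ for $D=\pi^*\mathcal{L}-\epsilon E$ is unjustified: one has $D\cdot_k\tilde C_m={\rm mult}_{x/k}C_m\bigl(\tfrac{\mathcal{L}\cdot_k C_m}{{\rm mult}_{x/k}C_m}-\epsilon\bigr)$, and since the multiplicities ${\rm mult}_{x/k}C_m$ may grow without bound while the ratio decreases to $\epsilon$, this quantity need not tend to $0$ and need not even be bounded. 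Second, even granting a bound on $D\cdot_k\tilde C_m$, the Hodge index inequality only bounds $\tilde C_m^2$ \emph{from above}; an upper bound on self-intersection does not bound the degree $H\cdot_k\tilde C_m$ against a fixed ample $H$ (Hodge index gives $(H\cdot C)^2\geq H^2\,C^2$, a lower bound on the degree when $C^2>0$ and no information otherwise), so ``finitely many numerical classes'' does not follow. The standard way to rescue this route is different: show that every submaximal irreducible curve has $\tilde C^2_k<0$ on the blow-up (by comparing against the isotropic class $\pi^*\mathcal{L}-\sqrt{\mathcal{L}^2_k/\alpha}\,E$ inside the positive cone), and then use that distinct irreducible curves of negative self-intersection are linearly independent in the N\'eron--Severi group, which caps their number by the Picard rank. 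You would still need to justify the signature statement for the $k$-valued form, as you note.

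For comparison, the paper avoids Hodge index altogether. It picks a rational $\gamma$ with $\epsilon<\gamma<\tfrac{1}{\sqrt{\alpha}}\sqrt{\mathcal{L}^2_k}$, uses Riemann--Roch over $k$ (Theorem \ref{Tan:2.10}) and a parameter count on the evaluation map $|d\mathcal{L}|\to O_{X,x}/m_x^{m}$ to produce a single divisor $D\in|d\mathcal{L}|$ with ${\rm mult}_{x/k}D\geq\alpha d\gamma$, and then applies the Bezout-type inequality (Lemma \ref{Das:9.6}) to show that every irreducible curve with ratio below $\gamma$ must be a component of $D$; since $D$ has finitely many components, the infimum is attained. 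The hypothesis $\epsilon<\tfrac{1}{\sqrt{\alpha}}\sqrt{\mathcal{L}^2_k}$ enters exactly to make the relevant quadratic in $d$ have positive leading coefficient $\mathcal{L}^2_k-\alpha\gamma^2$. If you want to pursue your route, the negative-self-intersection plus linear-independence argument is the missing ingredient; otherwise the paper's dimension-count argument is the more elementary path over a general field.
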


\begin{proof}
Let $\gamma$ be a rational number such that $\epsilon(X,\mathcal{L},x)<\gamma<\frac{1}{\sqrt{\alpha}}\sqrt{{\mathcal{L}}^2_k}$. 
From Proposition \ref{Das:9.8} we have a sequence of  irreducible curves $C_n$ passing through $x$ in $X$ such that $\frac{\mathcal{L}\cdot_{k}C_n}{{\rm mult}_{x/k}C_n}<\gamma$ for all $n\geq N$. 
We choose an integer $r$ such that $r\mathcal{L}-K_{X}$ is ample where $K_X$ is the canonical bundle.

\noindent Claim: There exists an integer $d$ and a curve $D\in |d\mathcal{L}|$ passing through $x$ such that 
$$\frac{\mathcal{L}\cdot_{k}D}{{\rm mult}_{x/k}D}\leq \frac{\mathcal{L}^2_k}{\alpha\gamma} .$$

We choose $d>r$ such that $m=d\gamma$ is an integer. 
By Serre vanishing theorem we have $h^1(d\mathcal{L})=h^2(d\mathcal{L})=0$ for large enough $d$. 
Now using Theorem \ref{Tan:2.10} we have 
$$h^0(d\mathcal{L})=\frac{d\mathcal{L}\cdot_{k}(d\mathcal{L}-K_X)}{2}+\chi(O_X) =\frac{d(d-r)\mathcal{L}^2_k}{2}+\frac{d\mathcal{L}\cdot_k(r\mathcal{L}-K_X)}{2}+\chi(O_X) .$$
Since $r\mathcal{L}-K_{X}$ is ample, we get $h^0(d\mathcal{L})>\frac{d(d-r)\mathcal{L}^2_k}{2}+\chi(O_X)$.

Now we prove that $|d\mathcal{L}|$ contains a curve $D$ with ${\rm mult}_xD \geq \alpha m$ if $h^0(d\mathcal{L})>\frac{\alpha m(m+1)}{2}$.

Since the curves $C$ passing through $x$ in the linear system are in a bijective correspondence with the global sections of the line bundle, we have 
$${\rm mult}_{x/k}C=\alpha\sup\{r|s_x=0 ,s_x\in m_x^r\mathcal{L}_x\}$$ 
where $s$ is the section corresponding to $C$.
Now consider the evaluation map $\phi:|d\mathcal{L}| \to \frac{O_{X,x}}{{m_x}^m}$. 
The dimension of $O_{X,x}/{m_x}^m$ is $\alpha m(m+1)/2$ which is less than the dimension of $|d\mathcal{L}|$. 
Therefore the kernel of $\phi$ is non empty.
So $|d\mathcal{L}|$ contains a curve $D$ with ${\rm mult}_xD\geq\alpha.m$ if  $\frac{d(d-r)\mathcal{L}^2_k}{2}+\chi(O_X)>\frac{\alpha.m(m+1)}{2}$, that is, $(\mathcal{L}_k^2-\alpha{\gamma}^2)d^2-(\alpha\gamma+r\mathcal{L}_k^2)d+2\chi(O_X)>0$.
This is a qudratic in $d$ with positive leading term. 
If we choose $d$ large enough such that the above term is positive then $|d\mathcal{L}|$ contains a curve $D$ with ${\rm mult}_xD\geq\alpha m$ for some large enough $d$.

Hence $\frac{\mathcal{L}\cdot_kD}{{\rm mult}_{x/k}D}\leq\frac{\mathcal{L}\cdot_kD}{\alpha m}=\frac{\mathcal{L}^2_k}{\gamma \alpha}$.

We now claim that every irreducible and reduced curve $C$ in $X$ satisfying $\frac{\mathcal{L}\cdot_kC}{{\rm mult}_{x/k}C}<\gamma$ is a component of $D$.
If $C$ is not a component of $D$ then using Lemma \ref{Das:9.6} we have 
$$C\cdot_kD = d\mathcal{L}\cdot_kC\geq \frac{1}{\alpha}({\rm mult}_{x/k}D)({\rm mult}_{x/k}C) >C\cdot_kD$$ 
which is a contradiction.
Hence the result.
\end{proof}

\end{document}